\newtheorem{Theorem}{Theorem}[section] 
\newtheorem{lem}[Theorem]{Lemma}
\newtheorem{cor}[Theorem]{Corollary}
\newtheorem{prop}[Theorem]{Proposition}
\newtheorem{ex}[Theorem]{Example}
\def\core{\operatorname{core}} 
\def\Ass{\operatorname{Ass}}
\def\depth{\operatorname{depth}} 
\def\astab{\operatorname{astab}}
\def\NN{{\mathbb N}}
\def\ZZ{{\mathbb Z}} 
\def\a{{\mathbf a}}
\def\b{{\mathbf b}}
\def\c{{\mathbf c}}
\def\e{{\mathbf e}}
\def\mm{{\mathfrak m}}
\def\nn{{\mathfrak n}}
\def\F{{\mathcal F}}
\def\H{{\Omega}}
\def\D{{\Delta}}
\def\G{{\Gamma}}
\begin{document}

\title{Saturation and associated primes\\ of powers of edge ideals}
\author{Ha Thi Thu Hien}
\address{Foreign Trade University, 91 Chua Lang, Hanoi, Vietnam}
\email{thuhienha504@gmail.com}

\author{Ha Minh Lam}
\address{Institute of Mathematics, Vietnam Academy of Science and Technology, 18 Hoang Quoc Viet, Hanoi, Vietnam}
\email{hmlam@math.ac.vn}

\author{Ngo Viet Trung}
\address{Institute of Mathematics, Vietnam Academy of Science and Technology, 18 Hoang Quoc Viet, Hanoi, Vietnam}
\email{nvtrung@math.ac.vn}

\subjclass[2000]{13C05, 13F55}
\keywords{graph, edge ideal, power, saturation, associated prime, index of stability, depth, cover, neighborhood, odd cycle, weighted graph, matching}
\thanks{The research is supported by  the National Foundation for Science and Technology Development. 
It was carried out during the stay of the last two authors at Vietnam Institute for Advanced Study in Mathematics in 2013.}

\begin{abstract}
For the edge ideal $I$ of an arbitrary simple graph $\G$ we describe the monomials of the saturation of $I^t$ in terms of (vertex) weighted graphs associated with the monomials. This description allows us to characterize the embedded associated primes of $I^t$ as covers of $\G$ which contain certain types of subgraphs of $\G$.  
As an application, we completely classify the associated primes of $I^2$ and $I^3$ in terms of $\G$.
\end{abstract}	

\maketitle

\section*{Introduction}

Let $\G$ be a simple graph on the vertex set $V= \{1,2, \ldots,n\}$. The {\em edge ideal} of $\G$ is the ideal $I(\G)$generated by the monomials $x_ix_j$, $\{i,j\} \in \G$, in the polynomial ring $R = k[x_1,...,x_n]$ over a field $k$.  For simplicity we set $I = I(\G)$. \smallskip

The first goal of this paper is to describe the monomials of the saturation $\widetilde {I^t}$ of any power $I^t$, $t \ge 2$. The motivation comes from the facts that for every $\a \in \NN^n$, the $\a$-component of the 
local cohomology modules of $R/I^t$ can be computed by means of a simplicial complex $\D_\a$ and that the facets of $\D_\a$ can be described by the condition $x^\a \in  \widetilde {J^t} \setminus J^t$, where $x^\a$ denotes the monomial whose exponent vector is $\a$ and $J$ is the edge ideal of a subgraph of $\G$ (see Section 1 for more details). If we know the monomials of $\widetilde {J^t}$ we will be able to compute these local cohomology modules, which provides information on the depth and the Castelnuovo-Mumford regularity of $R/I^t$. In particular, the existence of  a monomial in $\widetilde {I^t} \setminus I^t$ is a criterion for the maximal homogeneous ideal to be an associated prime of $I^t$. Therefore,  using localization we can obtain a combinatorial characterization of the associated primes of $I^t$,  which is the second goal of this paper.
\smallskip

There have been several works on the behavior of the associated primes and the depth of $I^t$ for $t$ large enough (see  e.g. \cite{Br}, \cite{CMS}, \cite{HM}, \cite{HH1}, \cite{HQ}, \cite{MMV},  \cite{SVV}, \cite{TNT}), but little is known about a particular power $I^t$ (except for its Cohen-Macaulay property \cite{RTY2}, \cite{TT1}). There was a combinatorial description of the associated primes of every power of the cover ideals of graphs by Francisco, Ha and Van Tuyl \cite{FHV}. However, this result cannot be used to study edge ideals of graphs. 
Recently, Herzog and Hibi \cite{HH2} gave a criterion for $\depth R/I^2 = 0$ or, in other words, 
for the maximal homogeneous ideal to be an associated prime of $I^2$ in terms of $\G$. 
A more general result was found independently by Terai and Trung \cite{TT2} who gave a combinatorial characterization of the associated primes of the second power of an arbitrary squarefree monomial ideal. 
So it is a challenge to find a similar characterization of the associated primes of $I^t$ for $t \ge 3$. 
The results of this paper provide a general approach to this problem. \smallskip

Our idea is to represent a monomial $x^\a$ by a (vertex) weighted graph $\G_\a$, which is obtained from $\G$ by assigning every vertex $i$ with the weight $a_i$, where $a_i$ is the exponent of $x_i$ in $x^\a$. 
Our first result is a combinatorial criterion for $x^\a  \in \widetilde{I^t} \setminus I^t$ in terms of $\G_\a$. More precisely, we show that $x^\a \in \widetilde{I^t} \setminus I^t$ if and only if  the matching numbers of $\G_\a$ and of certain induced subgraphs of $\G_\a$ satisfy some bounds (see Theorem \ref{monomial}). These bounds impose strong conditions on the set $V_\a := \{i \in V|\ a_i > 0\}$. For instance, we can show that every vertex of $V \setminus V_\a$ is adjacent to a vertex of $V_\a$ and that every connected component of the induced subgraph of $\G$ on $V_\a$ contains an odd cycle of length $\le 2t-1$. In particular, we can show that $\deg x^\a \le 3(t-1)$ if $x^\a \in \widetilde{I^t} \setminus I^t$. This might have consequences on the Castelnuovo-Mumford regularity of $R/I^t$. \smallskip

It is known that an associated prime of $I^t$ is of the form $P_F$, where $F$ is a cover of $\G$ and $P_F$ denotes the ideal generated by the variables $x_i$, $i \in F$. In particular, the minimal associated primes of $I^t$ correspond to the minimal covers of $\G$. Using the description of the monomials in $\widetilde{I^t} \setminus I^t$ and the technique of localization we are able to give a criterion for $P_F$ to be an embedded (i.e. non-minimal) associated prime of $I^t$ in terms of certain weighted graphs related to $F$ (see Theorem \ref{associated}). As a consequence, we obtain a simple necessary condition for  $P_F$ to be an embedded associated prime of $I^t$, namely that $F$ is minimal among the covers of $\G$ containing the closed neighborhood of a set $U \subseteq V$ such that every connected components of  $\G_U$ contains at least one odd cycle of length $\le 2t-1$. \smallskip

In a similar manner, we can also give a sufficient condition for $P_F$ to be an embedded associated prime of $I^t$, which depends only on the existence of a special weighted graph on $\G$ (see Theorem \ref{strong}).  It turns out that the above necessary condition is also sufficient for $P_F$ to be an embedded associated prime of some power of $I$. Moreover, we can give an upper bound for the least number $t_0$ such that $P_F$ is an associated prime of $I^t$ for all $t \ge t_0$ (see Theorem \ref{construction}). The proof is inspired by the technique of adding edges to an odd cycle by Chen, Morey and Sung \cite{CMS}, who used it to give a characterization of  the stable set $\Ass^\infty(I)$ and an upper bound for the stability index $\astab(I)$ of the sets of associated primes of $I^t$. Our approach yields a simpler characterization of $\Ass^\infty(I)$ and a better upper bound for $\astab(I)$. \smallskip

Finally, to demonstrate the efficiency of our approach we show that the afore mentioned results on the associated primes of $I^2$ of Herzog and Hibi \cite{HH2} and of Terai and Trung \cite{TT2}  are immediate consequences of the above criterion.  Furthermore, we give a complete classification of the associated primes of $I^3$. According to this classification, $P_F$ is an embedded associated prime of $I^3$ if and only $F$ is a minimal cover or minimal among the covers of $\G$ containing the closed neighborhood of a subgraph of the following forms: a triangle, a union of an edge and a triangle meeting at a vertex, a union of two non-adjacent triangles, a union of two triangles meeting at a vertex, a pentagon.

\section{Saturation of monomial ideals}

Let $\mm$ be the maximal homogeneous ideal of $R =  k[x_1,...,x_n]$. 
Given an ideal $I$ of $R$,  one calls the ideal  $\widetilde I : = \bigcup_{m \ge 1}I: \mm^m$ the {\em saturation} of $I$. 
In this section we will explain why the condition $x^\a \in \widetilde I \setminus I$ is important for the computation of the local cohomology modules of a monomial ideal and for the characterization of associated primes of powers of edge ideals. \smallskip

Let $I \neq 0$ be a monomial ideal. Then $R/I$ is an $\NN^n$-graded algebra. 
Hence the local cohomology modules $H_\mm^i(R/I)$ are $\ZZ^n$-graded. 
Takayama \cite{Ta} showed that for every degree $\a \in \ZZ^n$, the $\a$-component of $H_\mm^i(R/I)$ can be expressed  in terms of the reduced homology $\tilde H_j(\D_\a,k)$ of a simplicial complex $\D_\a$, which is defined as follows. \smallskip

Let $V = \{1,...,n\}$ and $G_\a := \{i \in V|\ a_i < 0\}$. For every subset $G \subseteq V$ let $\a_G$ denote the vector obtained from $\a$ by setting $a_i = 0$ for $i \in G$ and define 
$$I_G := k[x_i|\ i \in V\setminus G] \cap IR[x_i^{-1}|\ i \in G].$$
Then the set of  the facets of $\D_\a$ is given by the formula
\begin{equation*}
\F(\D_\a)  = \{G \setminus G_\a|\ G_\a \subseteq G \subseteq V,\  x^{\a_G} \in \widetilde {I_G} \setminus I_G\}.
\end{equation*} 
This description of $\D_\a$ is taken from \cite [Lemma 1.3]{TT2}, which is simpler than the original definition in \cite{Ta}. 
\smallskip

Let $\D$ be the simplicial complex on $V$ such that  $\sqrt{I}$ is the Stanley-Reisner ideal of $\D$.
This means  $\sqrt{I}$ is generated by the monomials $x_{i_1}\cdots x_{i_r}$, $\{i_1,...,i_r\} \not \in \D$.
 For $j = 1,...,n$, we denote by $\rho_j$ the maximum of positive $j$-th coordinates of all vectors $\b \in \NN^n$ such that $x^\b$ is a minimal generator of $I$. Then the result of Takayama can be formulated as follows.

\begin{Theorem} \label{Takayama} \cite[Theorem 1]{Ta}
$$\dim_kH_\mm^i(R/I)_\a = 
\begin{cases}
\dim_k\widetilde H_{i-|G_\a|-1}(\D_\a,k) & \text{\rm if }\ G_\a \in \D\ \text{\rm and}\\
&\ \ \  \ a_j  < \rho_j\ \text{\rm for}\ j=1,...,n,\\
0 & \text{\rm else. }
\end{cases} $$
\end{Theorem}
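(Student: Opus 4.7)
The plan is to compute $H_\mm^i(R/I)_\a$ via the $\ZZ^n$-graded \v{C}ech complex on $x_1,\ldots,x_n$ and then match the resulting complex of $k$-vector spaces with the reduced cochain complex of $\D_\a$. Writing $\check C^p = \bigoplus_{|G|=p} R_G$ with $R_G := R[x_i^{-1} : i \in G]$, each $R_G/IR_G$ inherits a $\ZZ^n$-grading because $I$ is monomial, so $H_\mm^i(R/I)_\a$ is the $i$-th cohomology of the finite-dimensional complex $\check C^\bullet(R/I)_\a$.

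A direct monomial analysis shows that $(R_G/IR_G)_\a$ equals $k$ exactly when $G \supseteq G_\a$ and $x^{\a_G} \notin I_G$, and is $0$ otherwise. Set
\[
\Sigma_\a := \{G \subseteq V : G \supseteq G_\a,\ x^{\a_G} \notin I_G\}.
\]
The first key observation is that $\Sigma_\a$ is closed under passage to subsets that still contain $G_\a$: if $G' \subseteq G$ with $G' \supseteq G_\a$ and $x^{\a_{G'}} \in I_{G'}$, then a generator $x^\b$ of $I$ witnessing this has $b_j \le a_j$ for all $j \notin G'$ and \emph{a fortiori} for all $j \notin G$, so $x^{\a_G} \in I_G$. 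Consequently $\Sigma'_\a := \{G \setminus G_\a : G \in \Sigma_\a\}$ is a simplicial complex on $V \setminus G_\a$.

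Next I would prove $\Sigma'_\a = \D_\a$ by matching facets. Using $\widetilde{I_G} = \bigcup_m I_G : \mm_G^m$ with $\mm_G = (x_i : i \notin G)$, one checks that $x^{\a_G} \in \widetilde{I_G}$ iff for every $i \notin G$ and every sufficiently large $m$ the product $x_i^m x^{\a_G}$ lies in $I_G$, which translates into: for each $i \notin G$ there is a generator $x^\b$ of $I$ with $b_j \le a_j$ for all $j \notin G \cup \{i\}$, i.e.\ $G \cup \{i\} \notin \Sigma_\a$. Combined with $x^{\a_G} \notin I_G$, this says $x^{\a_G} \in \widetilde{I_G} \setminus I_G$ precisely when $G$ is maximal in $\Sigma_\a$, which matches the paper's facet description of $\D_\a$. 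Reindexing by $F = G \setminus G_\a$, the \v{C}ech differential on $\check C^\bullet(R/I)_\a$ becomes the simplicial coboundary of $\D_\a$ shifted by $|G_\a|+1$, so $\dim_k H_\mm^i(R/I)_\a = \dim_k \tilde H_{i-|G_\a|-1}(\D_\a, k)$ by self-duality of reduced (co)homology over a field.

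Finally I would verify the two ``else $0$'' clauses. If $G_\a \notin \D$, then $\prod_{i \in G_\a} x_i \in \sqrt{I}$, so some generator of $I$ has support inside $G_\a$ and becomes a unit in every $R_G$ with $G \supseteq G_\a$; hence $\Sigma_\a = \emptyset$, $\Sigma'_\a$ is the void complex, and its reduced homology vanishes. If $a_j \ge \rho_j$ for some $j$, then every generator $x^\b$ of $I$ satisfies $b_j \le \rho_j \le a_j$, so membership in $\Sigma_\a$ is insensitive to whether $j \in G$, and $\Sigma'_\a$ becomes a cone with apex $j$, hence contractible. The hard part will be the step identifying the \v{C}ech differential with the simplicial coboundary: while the underlying combinatorics reduces to the standard isomorphism between the \v{C}ech complex on $x_1,\ldots,x_n$ and the augmented cochain complex of the full simplex on $V$, the signs must be tracked carefully so that the shifted identification is a genuine isomorphism of complexes and not merely a dimension count.
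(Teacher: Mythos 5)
The paper offers no proof of this statement at all: it is quoted from Takayama \cite{Ta}, with the facet description of $\D_\a$ imported from \cite[Lemma 1.3]{TT2}. So there is nothing internal to compare against; judged on its own, your reconstruction is correct and follows the standard route behind both references. The three substantive points are all handled properly: (1) the degree-$\a$ strand of the $\ZZ^n$-graded \v{C}ech complex has $(R_G/IR_G)_\a=k$ exactly when $G\supseteq G_\a$ and $x^{\a_G}\notin I_G$; (2) your $\Sigma_\a$ is downward closed among sets containing $G_\a$, and this closedness is exactly what makes ``no one-element extension lies in $\Sigma_\a$'' equivalent to maximality, so your translation of $x^{\a_G}\in\widetilde{I_G}\setminus I_G$ (saturation taken with respect to $(x_i:i\notin G)$, as you do) identifies the facets of $\Sigma'_\a$ with the paper's facet set and hence $\Sigma'_\a=\D_\a$; (3) the two vanishing clauses follow since $G_\a\notin\D$ yields a generator supported in $G_\a$, so $\Sigma_\a=\emptyset$, while $a_j\ge\rho_j$ makes membership in $\Sigma_\a$ independent of $j$ (note $a_j\ge\rho_j\ge 0$ forces $j\notin G_\a$, so $j$ genuinely is an available apex), so $\D_\a$ is void or a cone. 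In fact your argument proves the cleaner statement that $\dim_k H^i_\mm(R/I)_\a=\dim_k\widetilde H_{i-|G_\a|-1}(\D_\a,k)$ holds for every $\a$, the displayed hypotheses serving only to force the right-hand side to vanish; this is a legitimate strengthening, not an error. The one step you leave open, the sign bookkeeping, is routine: the \v{C}ech sign for $G\mapsto G\cup\{i\}$ differs from the simplicial coboundary sign for $G\setminus G_\a\mapsto(G\setminus G_\a)\cup\{i\}$ by $(-1)^{|\{l\in G_\a\,:\,l<i\}|}$, and rescaling the basis vector of $G$ by $(-1)^{\sum_{i\in G\setminus G_\a}|\{l\in G_\a\,:\,l<i\}|}$ turns the identification into an isomorphism of complexes; the final passage from reduced cohomology to reduced homology is just universal coefficients over the field $k$.
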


To compute the facets of $\D_\a$ we have to check the condition $x^{\a_G} \in \widetilde {I_G} \setminus I_G$.
In particular, this condition implies $\widetilde {I_G} \neq I_G$. 
We shall see that a set $G$ with the property $\widetilde {I_G} \neq I_G$ corresponds to an associated prime of $I$.
For a subset $F$ of $V$, we denote by $P_F$ the ideal of $R$ generated by the variables $x_i$, $i \in F$.
Then every associated prime of $I$ has the form $P_F$.

\begin{lem} \label{asso}
$\widetilde {I_G} \neq I_G$ if and only if $P_{V \setminus G}$ is an associated prime of $I$.
\end{lem}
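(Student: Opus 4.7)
The plan is to reduce the statement to the standard correspondence between associated primes of $I$ in $R$ and associated primes of $I_G$ in $S := k[x_i\mid i\in V\setminus G]$, via localization at the variables indexed by $G$. Write $\nn := (x_i\mid i\in V\setminus G)S$ for the maximal ideal of $S$ (with respect to which $\widetilde{I_G}$ is computed). As a preliminary, I use the general fact that for any proper monomial ideal $J\subset S$, one has $\widetilde J\neq J$ if and only if $\nn\in\Ass(S/J)$: given $f\in\widetilde J\setminus J$, choose the smallest $k\ge 1$ with $\nn^k f\subseteq J$; if $k=1$, then $f$ itself has annihilator $\nn$ modulo $J$, while if $k\ge 2$, any $m\in\nn^{k-1}$ with $mf\notin J$ has the same property. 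The converse is immediate from the definition of saturation.

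Next, I identify $I_G$ explicitly: each generator $x^\b$ of $I$ becomes, in $R[x_i^{-1}\mid i\in G]$, a unit times its ``$S$-part'' $x^{\b'}$ (obtained by setting $b'_i=0$ for $i\in G$), and intersecting with $S$ shows that $I_G$ is the $S$-ideal generated by these $S$-parts. In particular $IR[x_i^{-1}\mid i\in G] = I_G\cdot R[x_i^{-1}\mid i\in G]$, whence
$$
R[x_i^{-1}\mid i\in G]\big/IR[x_i^{-1}\mid i\in G]\;\cong\;(S/I_G)\otimes_k k[x_i^{\pm 1}\mid i\in G].
$$

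Now I apply the standard correspondence of associated primes twice. Localization at the multiplicative set generated by $\{x_i\mid i\in G\}$ identifies those $P_F\in\Ass_R(R/I)$ with $F\subseteq V\setminus G$ (equivalently $F\cap G=\emptyset$) with the associated primes of the left-hand side of the display above. Faithful flatness of the Laurent polynomial extension $S/I_G\subseteq (S/I_G)\otimes_k k[x_i^{\pm 1}\mid i\in G]$ then identifies these, by contraction to $S$, with $\Ass_S(S/I_G)$; and since $I_G$ is monomial in $S$, every member of $\Ass_S(S/I_G)$ is automatically of the form $P_F S$ with $F\subseteq V\setminus G$. Under the composite correspondence $P_F\leftrightarrow P_F S$, the specialization $F = V\setminus G$ matches $P_{V\setminus G}$ with $\nn$, and together with the preliminary this yields the lemma. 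The only delicate point is the three-ring bookkeeping $R\to R[x_i^{-1}\mid i\in G]\leftarrow S$; once one verifies that $P_F$, $P_F R[x_i^{-1}\mid i\in G]$ and $P_F S$ correspond to one another on associated primes of $I$, $IR[x_i^{-1}\mid i\in G]$ and $I_G$ respectively for $F\subseteq V\setminus G$, the conclusion is a direct specialization.
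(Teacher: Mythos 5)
Your proof is correct and follows essentially the same route as the paper's: both rest on the equivalence ``$\widetilde{J}\neq J$ iff the maximal ideal is associated,'' the identity $I_G\cdot R[x_i^{-1}\mid i\in G]=I\cdot R[x_i^{-1}\mid i\in G]$, and the transfer of associated primes between $k[x_i\mid i\in V\setminus G]$, $R$, and the Laurent ring. The only cosmetic difference is that you pass from $k[x_i\mid i\in V\setminus G]$ to $R[x_i^{-1}\mid i\in G]$ in one faithfully flat step, whereas the paper factors this as a polynomial extension to $R$ followed by localization.
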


\begin{proof}
Set $A := k[x_i|\ i \in V \setminus G]$ and let $Q$ be the maximal homogeneous ideal of $A$. 
Then $\widetilde {I_G} \neq I_G$ if and only if $Q$ is an associated prime of $I_G$.
Since $P_{V \setminus G} = Q R$ and $R = A[x_i|\  i \in G]$ is a polynomial ring over $A$,  
$Q$ is an associated prime of $I_G$ if and only if $P_{V \setminus G}$ is an associated prime of $I_G R$.
Set $B := R[x^{-1}_i|\  i \in G]$. Since $B$ is a localization of $R$ and $P_{V \setminus G} \neq B$, $P_{V \setminus G}$ is an associated prime of $I_G R$ if and only if  $P_{V \setminus G}B$ is an associated prime of $I_GB$. 
By definition, $I_G$ is generated by the monomials obtained from the monomials of $I$ by removing the variables $x_i$, $i \in G$. Thus, every monomial of $I$ is divisible by a monomial of $I_G$. 
Hence $IB \subseteq I_GB$. On the other hand, $I_GB \subseteq IB$ because $I_G = A \cap IB$. Therefore, $I_GB = IB$. Since $P_{V \setminus G}B$ is an associated prime of $IB$ if and only if $P_{V \setminus G}$ is an associated prime of $I$, we get the conclusion.
\end{proof}

By Lemma \ref{asso}, we only need to check the condition $x^{\a_G} \in \widetilde {I_G} \setminus I_G$ for $G$ with the property that $P_{V\setminus G}$ is an associated prime of $I$. Therefore, the associated primes of $I$ play an important role in the computation of the local cohomology modules of $R/I$. \smallskip 

From now on let $I$ be the edge ideal of a simple graph $\G$ on the vertex set $V$. We shall see that the description of the associated primes of a power $I^t$ can be reduced to the problem when $\mm$ is an associated prime of $I^t$. It is well known that $P_F$ is a minimal associated prime of $I^t$ if and only if $F$ is a minimal (vertex) cover of  $G$. So we have to find the associated primes of $I^t$ among the ideals $P_F$, where $F$ is a cover of $\G$. \smallskip

Let $\core(F)$ denote the set of vertices in $F$ which are not adjacent to any vertex in $V \setminus F$.
Note that a cover $F$ is minimal if and only if $\core(F) = \emptyset$.
Let $\G_U$ denote the induced subgraph of $\G$ on subset $U$ of $V$. 
In the following we set $I(\G_{\core(F)}) = 0$ if  $\core(F) = \emptyset$.

\begin{prop} \label{core}
Let $F$ be a cover of $\G$. Let $J = I(\G_{\core(F)})$. Let $\nn$ denote  the maximal homogeneous ideal of $k[x_i|\ i \in \core(F)]$. Then $P_F$ is an associated prime of $I^t$ if and only if $\nn$ is an associated prime of $J^t$.
\end{prop}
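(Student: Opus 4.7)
The plan is to use Lemma~\ref{asso} to rephrase the statement as a nonvanishing of a saturation, carry out a monomial analysis that reduces the question to one about powers of $J$ in $B$, and invoke the monotonicity of associated primes of powers of edge ideals to match the power~$t$.

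First, apply Lemma~\ref{asso} with $G=V\setminus F$: the condition $P_F\in\Ass(R/I^t)$ is equivalent to $\widetilde{(I^t)_G}\neq(I^t)_G$ in $A:=k[x_i\mid i\in F]$. Since $F$ is a vertex cover, substituting $x_j=1$ for $j\notin F$ in the generators of $I$ identifies $(I^t)_G=(M+JA)^t$, where $M:=(x_i\mid i\in F\setminus\core(F))$ in $A$ and $J:=I(\G_{\core(F)})$ in $B:=k[x_i\mid i\in\core(F)]$. Writing $u_1,\ldots,u_s$ for the generators of $M$, we have $A=B[u_1,\ldots,u_s]$.

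Second, I would analyze monomials. Each monomial $x^\a\in A$ factors uniquely as $x^\a=x^\b\cdot m$ with $x^\b\in B$ and $m$ a monomial of degree $q$ in the $u_j$'s. Expanding $(M+JA)^t=\sum_{i=0}^t M^i(JA)^{t-i}$ and checking which generator can divide $x^\a$ gives two equivalences: $x^\a\in(M+JA)^t$ iff $x^\b\in J^{t-q}$, and $x^\a\in\widetilde{(M+JA)^t}$ iff $x^\b\in\widetilde{J^{t-q}}$ in $B$. The first is a direct calculation; the second uses the monomial criterion for saturation (for each variable $x_i$, $x_i^N x^\a\in(M+JA)^t$ for large $N$), together with the observations that the $u_j$-condition is automatic because $u_j^t\in M^t$, and that for $y\in\nn$ the condition reduces via the first equivalence to $y^N x^\b\in J^{t-q}$ in $B$. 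Combining these gives $\widetilde{(M+JA)^t}\neq(M+JA)^t$ in $A$ iff $\widetilde{J^{t'}}\neq J^{t'}$ in $B$ for some $t'\in\{1,\ldots,t\}$ (take $t'=t-q$), equivalently, $\nn\in\Ass_B(B/J^{t'})$ for some $t'\leq t$.

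The final step is to see this is equivalent to $\nn\in\Ass_B(B/J^t)$ with the same $t$. I would invoke the monotonicity of associated primes of powers of edge ideals \cite{MMV}: $\Ass_B(B/J^{t-1})\subseteq\Ass_B(B/J^t)$, which collapses the "some $t'\leq t$" to $t'=t$ and yields the proposition. The main obstacle is this last step: a natural attempt to lift a witness $x^\b\in\widetilde{J^{t-q}}\setminus J^{t-q}$ (with $\nn x^\b\subseteq J^{t-q}$) to one for $\widetilde{J^t}$ by multiplying by $q$ edges fails in general, since the matching number of $x^\b$ may increase by more than $q$ upon such multiplication -- for example, in a triangle with a pendant edge one has $\mu(x_3^2\cdot x_1x_2)=2$ while $\mu(x_3^2)=0$. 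The monotonicity result from \cite{MMV} is thus what allows the specific power $t$ to match on both sides.
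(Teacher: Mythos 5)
Your proposal is correct and follows essentially the same route as the paper: reduce via Lemma \ref{asso} to the ideal $(I^t)_G=(I_G)^t$ with $I_G=\big(x_j\mid j\in F\setminus\core(F)\big)A+JA$, and then pass from ``$\nn\in\Ass(J^{t'})$ for some $t'\le t$'' to $t'=t$ using the persistence result \cite[Theorem 2.15]{MMV}. The only difference is that where the paper cites \cite[Lemma 2.1]{CMS} for the disjoint-variables step, you verify it directly by a (correct) monomial computation.
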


\begin{proof}
Set $A := k[x_i|\ i \in F]$ and let $Q$ be the maximal homogeneous ideal of $A$.
By the proof of Lemma \ref{asso}, $P_F$ is an associated prime of $I^t$ if and only if $Q$ is an associated prime of $(I^t)_G$, where $G = V \setminus F$.
By definition, $I_G$ and  $(I^t)_G$ are generated by the monomials obtained from the monomials of $I$ and $I^t$ by removing the variables $x_i$, $i \not\in F$. From this it follows that $(I^t)_G = (I_G)^t$.
For all $j \in F \setminus \core(F)$, there exists $i \not\in F$ adjacent to $j$. Since $x_ix_j \in I$, we get $x_j \in I_G$. 
The monomials of $I_G$ which do not contain any variable $x_j$, $j \in F \setminus \core(F)$, belong to $J$.
Since $J \subset I_G$, this implies 
$$I_G = \big(x_j|\ j \in F \setminus \core(F)\big)A+ JA.$$
Note that the ideals  $\big(x_j|\ j \notin F \setminus \core(F)\big)$ and $J$ are generated 
by monomials in two disjoint sets of variables. Then $Q$ is an associated prime of $(I_G)^t$ if and only if $\nn$ is an associated prime of $J^s$ for some $s \le t$ \cite[Lemma 2.1]{CMS}.  By \cite[Theorem 2.15]{MMV},  the latter condition implies that $\nn$ is also an associated prime of $J^t$. Therefore, $P_F$ is an associated prime of $I^t$ if and only if $\nn$ is an associated prime of $J^t$. 
\end{proof}
 
By Lemma \ref{core}, the description of the associated primes of $I^t$ can be reduced to problem when there does exist a monomial $x^\a \in \widetilde{J^t} \setminus J^t$ because this condition is a criterion for $\nn$ to be 
an associated prime of $J^t$. Furthermore, the condition $x^{\a_G} \in \widetilde {(I^t)_G} \setminus (I^t)_G$, which appears in the definition of the complex $\D_\a$ of $I^t$, is in fact equivalent to the condition $x^\a \in  \widetilde{J^s} \setminus J^s$ for some edge ideal $J$ and some $s \le t$.

\begin{prop}  
Let $F = V \setminus G$ and $J = I(\G_{\core(F)})$. Then   $x^{\a_G} \in \widetilde {(I^t)_G} \setminus (I^t)_G$ if and only if $x^{\a_{V \setminus \core(F)}} \in \widetilde{J^s} \setminus J^s$, where $s = t-\sum_{i \in F \setminus \core(F)}a_i$.
\end{prop}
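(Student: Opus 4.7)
The plan is to decompose both $x^{\a_G}$ and $(I^t)_G$ along the partition $F = \core(F) \sqcup (F \setminus \core(F))$ and exploit the fact that the two pieces live in disjoint variables to reduce everything to a statement purely about $J^s$ and its saturation.

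In $A := k[x_i \mid i \in F]$, the proof of Proposition~\ref{core} yields the decomposition $I_G = L + JA$, where $L := (x_j \mid j \in F \setminus \core(F))A$, and the generators of $L$ use variables disjoint from those in $J$. Hence $(I^t)_G = (I_G)^t = \sum_{i=0}^{t} L^i J^{t-i}$. I would then write $x^{\a_G} = y \cdot z$ with $y = \prod_{j \in F \setminus \core(F)} x_j^{a_j}$ of degree $k := \sum_{j \in F \setminus \core(F)} a_j$ (so that $s = t - k$), and $z = x^{\a_{V \setminus \core(F)}}$ supported in $\core(F)$. Because $L$ and $J$ live in disjoint variables, a monomial $y'z'$ split the same way lies in $L^i J^{t-i}$ iff $y' \in L^i$ and $z' \in J^{t-i}$; applied to $yz$, this shows $yz \in (I_G)^t$ iff $z \in J^{t-i}$ for some $i \le \min(t, k)$, which, by maximizing $i$, is equivalent to $z \in J^{\max(0,\,s)}$. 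If $s \le 0$, then both sides of the proposition are automatically false ($yz \in L^t \subseteq (I_G)^t$ on the left, and $z \in J^s = A$ on the right), so I may assume $s > 0$, making the equivalence read $yz \in (I_G)^t \iff z \in J^s$.

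For the saturation, $\widetilde{(I_G)^t} = \bigcup_{m} (I_G)^t : Q^m$ where $Q$ is the maximal ideal of $A$. Writing $Q = L + \nn$ with $\nn = (x_i \mid i \in \core(F))$, every monomial $u \in Q^m$ splits uniquely as $u = u_1 u_2$ with $u_1$ in $L$-variables, $u_2$ in $\core(F)$-variables, and $\deg u_1 + \deg u_2 \ge m$. The same disjoint-variable analysis shows that $(yu_1)(zu_2) \in (I_G)^t$ iff $z u_2 \in J^{\max(0,\, s - \deg u_1)}$, so the nontrivial constraints are exactly those with $\deg u_1 = l < s$, each reading $z \cdot \nn^{m-l} \subseteq J^{s-l}$. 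Taking the union over $m$, the condition $yz \in \widetilde{(I_G)^t}$ thus becomes
\[
z \in \widetilde{J^{s-l}} \quad \text{for every } l \in \{0, 1, \ldots, s-1\}.
\]

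The main (if mild) obstacle is collapsing this family of conditions into the single statement $z \in \widetilde{J^s}$. The forward direction is immediate by taking $l = 0$. For the converse I use the inclusion $J^s \subseteq J^{s-l}$: if $z \, \nn^{m_0} \subseteq J^s$, then for any $m \ge m_0 + s - 1$ and any $l \in \{0, \ldots, s-1\}$ one has $z \, \nn^{m-l} \subseteq z \, \nn^{m_0} \subseteq J^s \subseteq J^{s-l}$, so a single $m$ witnesses all conditions simultaneously. Combining with the non-saturated equivalence yields $yz \in \widetilde{(I_G)^t} \setminus (I_G)^t \iff z \in \widetilde{J^s} \setminus J^s$, which is the proposition. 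The only delicacy is the bookkeeping for the $Q^m$-decomposition, which the disjoint-variable structure makes essentially routine.
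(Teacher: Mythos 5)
Your argument is correct, and it runs on the same engine as the paper's proof --- the decomposition $I_G = \big(x_j\mid j\in F\setminus\core(F)\big)A + JA$ into ideals in disjoint variables taken from the proof of Proposition~\ref{core} --- but the execution is genuinely different. The paper proceeds via two colon identities, $(I_G)^t : x_j = (I_G)^{t-1}$ for $j\in F\setminus\core(F)$ and $(I_G)^t\cap S = J^t$, peeling off the factor $\prod_{j\in F\setminus\core(F)}x_j^{a_j}$ to land in $(I_G)^s$ and then intersecting with $S$; for the saturation it colons by each variable separately, using $\bigcup_m (I_G)^t:x_i^m = A$ for $i\in F\setminus\core(F)$. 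You instead expand $(I_G)^t=\sum_{i=0}^t L^iJ^{t-i}$ and decide membership of the split monomial $yz$ by direct divisibility in disjoint variable sets; this in effect reproves the paper's relation $(I_G)^t:x_j=(I_G)^{t-1}$ rather than invoking it, and it also disposes of the degenerate case $s\le 0$, which the paper leaves implicit (your parenthetical ``$J^s=A$'' should be the unit ideal of $S$, but that is immaterial). The price of your route is the extra bookkeeping over generators of $Q^m$ and the need to collapse the family of conditions $z\in\widetilde{J^{s-l}}$, $0\le l\le s-1$, into the single condition $z\in\widetilde{J^s}$; your collapsing argument via $z\,\nn^{m_0}\subseteq J^s\subseteq J^{s-l}$ and a common large $m$ is correct, so the proof goes through. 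In short: the paper's colon formulation is shorter and slicker, while yours is more elementary and self-contained, at the cost of the routine but necessary $Q^m$-decomposition and collapsing step, which you handle properly.
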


\begin{proof}
By the proof of Proposition \ref{core} we have $(I^t)_G = (I_G)^t$ and
$$I_G = \big(x_j|\ j \in F \setminus \core(F)\big)A+ JA.$$
This formula implies the following relations:\par
(1) $(I_G)^t: x_j = (I_G)^{t-1}$ for all $j \in F \setminus \core(F)$,\par
(2) $(I_G)^t \cap S = J^t$, where $S = k[x_i|\ i \in \core(F)]$. \par
\noindent Note that $x^{\a_G} = x^{\a_{V \setminus \core(F)}}\prod_{j \in F \setminus \core(F)}x_j^{a_j}.$
Then $x^{\a_G} \in  (I_G)^t$ if and only if 
$x^{\a_{V \setminus \core(F)}} \in (I_G)^t: \prod_{j \in F \setminus \core(F)}x_j^{a_j} = (I_G)^s$,
where the last equality follows from (1). By definition, $x^{\a_{V \setminus \core(F)}} \in S$.
It follows from (2) that $x^{\a_{V \setminus \core(F)}} \in (I_G)^s$ if and only if 
 $x^{\a_{V \setminus \core(F)}}\in J^s$. Therefore, $x^{\a_G} \in  (I_G)^t$ if and only if  $x^{\a_{V \setminus \core(F)}} \in J^s$. 
\par
Similarly, we can show that for $i \in \core(F)$, $x^{\a_G} \in  \bigcup_{m \ge 1}(I_G)^t: x_i^m$ if and only if  $x^{\a_{V \setminus \core(F)}} \in \bigcup_{m\ge 1}J^s:x_i^m$. Since $\bigcup_{m \ge 1}(I_G)^t: x_i^m =  A$ for $i \in F \setminus \core(F)$, we have
$$\widetilde {(I_G)^t} = \bigcup_{m \ge 1}(I_G)^t: Q^m =
\bigcup_{m \ge 1}\bigcap_{i \in F}(I_G)^t: x_i^m = \bigcup_{m \ge 1}\bigcap_{i \in \core(F)}(I_G)^t: x_i^m.$$
Therefore, $x^{\a_G} \in \widetilde {(I_G)^t}$ if and only if 
$$x^{\a_{V \setminus \core(F)}} \in \bigcup_{m\ge 1}\bigcap_{i \in \core(F)} J^s:x_i^m = 
\bigcup_{m\ge 1}J^s:\nn^m =\widetilde{J^s}.$$
So we can conclude that $x^{\a_G} \in \widetilde {(I_G)^t} \setminus (I_G)^t$ 
if and only if $x^{\a_{V \setminus \core(F)}} \in \widetilde {J^s} \setminus J^s$.
\end{proof}

\section{Saturating weighted graphs}

Let $I$ be the edge ideal of a simple graph $\G$ on the vertex set $V = \{1,...,n\}$. The aim of this section is to characterize the condition $x^\a \in  \tilde{I^t} \setminus I^t$ for a given vector $\a \in \NN^n$ and to describe the associated primes of $I^t$ in  terms of $\G$. \smallskip

We shall need the notion of (vertex) weighted graphs. A {\em weighted graph} is a simple graph whose vertices are assigned with positive integers called {\em weight}. The simple graph is called the {\em base graph} of the weighted graph. We always consider a simple graph as a weighted graph whose vertices have the trivial weight 1.
\smallskip

Let $\H$ be an arbitrary weighted graph on a vertex set $U$. 
A {\em matching} of $\H$ is a family of edges (not necessarily different) such that 
every vertex of $\H$ appears in these edges no more times than its weight. 
The maximal number of edges of the matchings of $\H$ is called the {\em matching number}, 
denoted by $\nu(\H)$.  For every subset $N$ of $U$ we denote by $\H - N$ the  induced weighted subgraph of $\H$ on the vertex set $U \setminus N$.
 \smallskip
 
Let $V_\a := \{i \in V|\ a_i > 0\}$. If we assign each vertex $i \in V_\a$ with the weight $a_i$, we obtain a weighted graph called $\G_\a$. 
For a vertex $i \in V$, we denote by $N_\a(i)$ the set of all adjacent vertices of $i$ in $\G_\a$ and set $\deg_{\a}(i) = \sum_{j \in N_\a(i)} a_j$.  Using these notions we can translate the condition $x^\a \in  \tilde{I^t} \setminus I^t$
in combinatorial terms as follows. 

\begin{Theorem} \label{monomial} 
$x^\a \in  \widetilde{I^t} \setminus I^t$ if and only if the following conditions are satisfied: \par
{\rm (i) } $\nu(\G_\a) < t$, \par
{\rm (ii)} $\nu(\G_\a  - N_\a(i)) \ge t-\deg_\a(i)$ for all $i \in V$. 
\end{Theorem}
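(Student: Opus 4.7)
The plan is to translate both $x^\a \in I^t$ and $x^\a \in \widetilde{I^t}$ into combinatorial conditions on matching numbers of weighted graphs. Since $I^t$ is generated by the products of $t$ edges of $\G$, the monomial $x^\a$ lies in $I^t$ if and only if one can choose $t$ edges of $\G$ (with repetition) in such a way that every vertex $v$ appears in them no more than $a_v$ times. Such edges necessarily live inside $\G_\a$, so the existence of such a choice is exactly the statement $\nu(\G_\a) \ge t$. Thus $x^\a \notin I^t$ is condition (i).

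For the saturation part, I would use the standard fact that for any monomial ideal $J$, $x^\a \in \widetilde{J}$ iff for every $i \in V$ there is some $m_i \ge 0$ with $x^\a x_i^{m_i} \in J$. Applied to $J = I^t$ together with the translation above, this says: $x^\a \in \widetilde{I^t}$ iff $\nu(\G_{\a + m e_i}) \ge t$ for every $i \in V$ and some (equivalently, all sufficiently large) $m$. The central step is then to establish the identity
$$\nu(\G_{\a + m e_i}) \;=\; \deg_\a(i) + \nu(\G_\a - N_\a(i)) \qquad \text{for all } m \ge \deg_\a(i).$$
The lower bound is direct: attach to a maximum matching of $\G_\a - N_\a(i)$ exactly $a_j$ copies of the edge $\{i,j\}$ for each $j \in N_\a(i)$; the result is a legal matching of $\G_{\a + m e_i}$ of size $\nu(\G_\a - N_\a(i)) + \deg_\a(i)$, provided $m \ge \deg_\a(i)$.

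The main obstacle is the upper bound, which requires a small case analysis. Given an arbitrary matching $M$ of $\G_{\a + m e_i}$, write $M = M_i \sqcup M_\mathrm{rest}$, where $M_i$ consists of the edges meeting $i$, and partition $M_\mathrm{rest}$ into three subsets $M_1, M_2, M_3$ of edges having respectively two, one, or no endpoints in $N_\a(i)$. The edges in $M_3$ form a matching of the induced weighted subgraph $\G_\a - N_\a(i)$, so $|M_3| \le \nu(\G_\a - N_\a(i))$. Counting the total number of times vertices of $N_\a(i)$ appear in $M$ gives $|M_i| + 2|M_1| + |M_2| \le \deg_\a(i)$, which rearranges to $|M_i| + |M_1| + |M_2| \le \deg_\a(i) - |M_1| \le \deg_\a(i)$. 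Adding $|M_3|$ yields $|M| \le \deg_\a(i) + \nu(\G_\a - N_\a(i))$. Combining the identity with the saturation reformulation, $x^\a \in \widetilde{I^t}$ is exactly condition (ii), and together with (i) this proves the theorem.
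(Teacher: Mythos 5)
Your proposal is correct and follows essentially the same route as the paper's proof: the membership $x^\a \in I^t$ is translated into $\nu(\G_\a)\ge t$ exactly as in the paper, and the saturation condition is handled via the same key identity $\nu(\G_{\a+m\e_i}) = \deg_\a(i) + \nu(\G_\a - N_\a(i))$ for $m \ge \deg_\a(i)$, with the same lower-bound construction and the same counting of occurrences of vertices of $N_\a(i)$ for the upper bound (your four-part partition $M_i, M_1, M_2, M_3$ is just a more explicit bookkeeping of the paper's split into edges meeting $N_\a(i)$ and edges of $\G_\a - N_\a(i)$).
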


\begin{proof}
First we will show that $x^\a \not\in I^t$ if and only if condition (i) is satisfied.
Let $\e_i$ be the $i$-th unit vector of $\NN^n$. It is clear that $x^\a \in I^t$ if and only if there exists a family of $t$ edges $\{i_1,j_1\},...,\{i_t,j_t\}$ of $\G$ (not necessarily different) such that $x^\a$ is divisible by the product
$(x_{i_1}x_{j_1}) \cdots (x_{i_t}x_{j_t})$. The divisibility implies that every vertex of $V_\a$ appears in these edges no more times than its weight. Hence these edges form a matching of $\G_a$.
Thus, $x^\a \in I^t$ if and only if $\nu(\G_\a) \ge t$.  \par

It remains to show that $x^\a \in \widetilde{I^t}$ if and only if condition (ii) is satisfied.
By definition, $x^\a \in \widetilde{I^t}$ if and only if $x_i^m x^\a \in I^t$ for all $i \in V$ and $m \gg 0$.  
As we have seen above, this is equivalent to the condition $\nu(\G_{\a+ m\e_i}) \ge t$.
Let $\H$ be the weighted subgraph of $\G_{\a+m\e_i}$ 
whose edges contain at least one vertex in $N_\a(i)$ 
and whose vertices have the same weight as in $\G_{\a+m\e_i}$. 
Then the number of edges of a matching of $\H$ can not exceed the appearing times of the vertices of $N_\a(i)$ in these edges.  Hence $\nu(\H) \le \sum_{j \in N_\a(i)}a_j = \deg_\a(i)$. 
If $m \ge \deg_\a(i)$, there is a matching of $\H$ which consists of $\deg_\a(i)$ edges connecting $i$ with every vertex $j \in N_\a(i)$ $a_j$ times. Thus, $\nu(\H) = \deg_\a(i)$. 
Since the union of this matching of $\H$ with any matching of $\G_\a  - N_\a(i)$ is a matching of $\G_{\a+m\e_i}$, 
$$\nu(\G_{\a+m\e_i}) \ge \deg_\a(i) + \nu(\G_\a  - N_\a(i))$$
for $m \gg 0$. On the other hand, since every matching of $\G_{\a+m\e_i}$ is the disjoint union of a matching of  $\H$ and a matching of $\G_\a  - N_\a(i)$, 
$$\nu(\G_{\a+m\e_i}) \le \nu(\H) + \nu(\G_\a  - N_\a(i)) = \deg_\a(i) + \nu(\G_\a  - N_\a(i)).$$ 
Therefore, $\nu(\G_{\a+m\e_i}) = \deg_\a(i) + \nu(\G_\a  - N_\a(i))$ for $m \gg 0$. So we can conclude that 
$x^\a \in \widetilde{I^t}$ if and only if $\deg_{\G_\a}(i) + \nu(\G_\a  - N_\a(i)) \ge t$.
\end{proof}

In graph theory a subset $W$ of $V$ (or a subgraph of $\G$ on $W$) is called {\em dominating} if every vertex of $V \setminus W$ is adjacent to at least one vertex of $W$.

\begin{cor} \label{dominating}
If $x^\a \in  \widetilde{I^t} \setminus I^t$, then $V_\a$ is a dominating set of $\G$.
\end{cor}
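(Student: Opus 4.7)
The plan is to derive the domination property as a one-line consequence of Theorem~\ref{monomial} by specializing condition~(ii) to a vertex that would witness the failure of $V_\a$ to dominate. Arguing by contradiction, I would suppose there is some $i \in V \setminus V_\a$ whose neighbors in $\G$ all lie outside $V_\a$. Since $\G_\a$ is the weighted graph on the vertex set $V_\a$, the neighbors of $i$ inside $\G_\a$ are precisely those of its $\G$-neighbors with positive weight; by assumption there are none, so $N_\a(i) = \emptyset$ and therefore $\deg_\a(i) = \sum_{j \in N_\a(i)} a_j = 0$.

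With this interpretation, $\G_\a - N_\a(i) = \G_\a$, so condition~(ii) of Theorem~\ref{monomial} applied at $i$ reads $\nu(\G_\a) \ge t - 0 = t$, contradicting condition~(i), which says $\nu(\G_\a) < t$. This forces every $i \in V \setminus V_\a$ to have at least one neighbor in $V_\a$, which is exactly the definition of $V_\a$ being a dominating set of $\G$. There is no real obstacle here; the only subtlety is the interpretation of $N_\a(i)$ when $i \notin V_\a$, which must mean the $\G$-neighbors of $i$ that carry positive weight (the vertices of $\G_\a$ adjacent to $i$ in the base graph), and it is precisely the vanishing of this set that drives the contradiction.
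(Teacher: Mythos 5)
Your argument is correct and is essentially the paper's own proof in contrapositive form: the paper directly chains $\deg_\a(i) \ge t - \nu(\G_\a - N_\a(i)) \ge t - \nu(\G_\a) \ge 1$ for $i \in V \setminus V_\a$, while you specialize to the case $N_\a(i) = \emptyset$, where that same combination of conditions (i) and (ii) of Theorem~\ref{monomial} yields the contradiction $\nu(\G_\a) \ge t$. Your reading of $N_\a(i)$ for $i \notin V_\a$ (the $\G$-neighbors of $i$ lying in $V_\a$) matches the paper's convention, so there is no gap.
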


\begin{proof}
By Theorem \ref{monomial} we have 
$$\deg_{\a}(i) \ge t - \nu(\G_\a  - N_\a(i)) \ge t - \nu(\G_\a) \ge t- (t-1) =1$$
for all $i \in V \setminus V_\a$. This implies that $i$ is adjacent to at least one vertex in $V_\a$.
\end{proof}

\begin{ex}
{\rm Let $\G$ be a graph which contains a dominating cycle $C$ of length $2t-1$, $t \ge 2$. 
Let $\a \in \{0,1\}^n$ such that $V_\a$ is the vertex set of $C$. Then $x^\a \in  \tilde{I^t} \setminus I^t$.
To see this we will show that the conditions (i) and (ii) of Theorem \ref{monomial} are satisfied.
First, we observe that $\G_\a$ is the induced subgraph of $\G$ on $V_\a$.
Since $|V_\a| = 2t-1$, we must have $\nu(\G_\a) < t$.
Since $V_\a$ is a dominating subset of $V$, every vertex $i \in V\setminus V_\a$ is adjacent to some vertex of $V_\a$. This property also holds for $i \in V_\a$ because every vertex of $V_\a$ is adjacent to two vertices of $C$.
Therefore, $N_\a(i)$ is a non-empty subset of $V_\a$ for all $i \in V$. Since $|N_\a(i)| = \deg_\a(i)$, 
we have $\deg_\a(i) > 0$. Since every vertex of $N_\a(i)$ is contained in two edges of $C$,  the subgraph $\G_\a - N_\a(i)$ contain at least $2t-1-2\deg_\a(i)$ edges of $C$. Since these edges does not form the cycle $C$,
there are at least $t-\deg_\a(i)$ disjoint edges among them. 
Therefore, $\nu(\G_\a  - N_\a(i)) \ge t-\deg_\a(i)$ for all $i \in V$.}  
\end{ex}

We note that condition (ii) of Theorem \ref{core} may involve vertices outside of $\G_\a$. To investigate the conditions involving only the vertices of $\G_\a$ we call a weighted graph $\H$ on a vertex set $U$ {\em $t$-saturating} if $\nu(\H) < t$ and $\nu(\H - N_\H(i)) \ge t - \deg_\H(i)$ for all $i \in U$,  
where $N_\H(i)$ denotes the set of all adjacent vertices of $i$ and $\deg_\H(i)$ is the sum of the weights of the vertices of $N_\H(i)$.  
\smallskip 

Since $N_{\G_\a}(i) = N_\a(i)$ and $\deg_{\G_\a}(i) = \deg_\a(i)$ for $i \in V_\a$, Theorem \ref{monomial} can be reformulated as {\em  $x^\a \in  \tilde{I^t} \setminus I^t$ if and only if $\G_\a$ is $t$-saturating and $\nu(\G_\a  - N_\a(i)) \ge t - \deg_\a(i)$ for all $i \in V \setminus V_\a$}.   
\smallskip

It is well known that $\mm$ is an associated prime of $I^t$ if and only if there exists a monomial $x^\a \in  \widetilde{I^t} \setminus I^t$. Therefore, one can reformulate Theorem \ref{monomial} as a criterion for $\mm$ to be an associated prime of $I^t$. In the following we  will deduce from this criterion a combinatorial characterization of the embedded associated primes of $I^t$. Note that an embedded prime of $I^t$ is of the form $P_F$, where $F$ is a cover of $\G$ with  $\core(F) \neq \emptyset$. \smallskip

For a subset $U$ of $V$ we denote by $N(U)$ the set of the vertices adjacent to some vertex of $U$ and by $N[U]$ the union of $U$ and $N(U)$. These sets are called the {\em open neighborhood} and the {\em closed neighborhood} of $U$ in $\G$.
 
\begin{Theorem} \label{associated} 
Let $F$ be a cover of $\G$ with $\core(F) \neq \emptyset$. Then $P_F$ is an embedded associated prime ideal of $I^t$ if and only if $F$ is minimal among the covers containing $N[V_\a]$ for some $\a \in \NN^n$ such that  $\G_\a$ is $t$-saturating and $\nu(\G_\a  - N_\a(i)) \ge t - \deg_\a(i)$ for all $i \in \core(F) \setminus V_\a$.  
\end{Theorem}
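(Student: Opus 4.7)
My plan is to combine Proposition \ref{core}, Theorem \ref{monomial}, and Corollary \ref{dominating}, all applied to the edge ideal $J = I(\G_{\core(F)})$ of the induced subgraph on $\core(F)$. The overall strategy reduces the question to the existence of a monomial in $\widetilde{J^t}\setminus J^t$, which Theorem \ref{monomial} then translates into matching inequalities on $\G_\a$.

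First, by Proposition \ref{core}, $P_F$ is associated to $I^t$ if and only if $\nn$ (the maximal homogeneous ideal of $k[x_i\mid i\in \core(F)]$) is associated to $J^t$. By the standard characterization of the maximal ideal as an associated prime, this holds exactly when there is some $\a \in \NN^n$ with $V_\a \subseteq \core(F)$ and $x^\a \in \widetilde{J^t}\setminus J^t$. The hypothesis $\core(F)\neq\emptyset$ guarantees that $F$ is not a minimal cover, so once $P_F$ is shown associated it is automatically embedded.

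Second, I would apply Theorem \ref{monomial} to $J$ on the graph $\G_{\core(F)}$. Because $V_\a \subseteq \core(F)$ and adjacency in $\G_{\core(F)}$ agrees with adjacency in $\G$ within $\core(F)$, the weighted graph $(\G_{\core(F)})_\a$ coincides with $\G_\a$, and for every $i \in \core(F)$ the quantities $N_\a(i)$ and $\deg_\a(i)$ are identical in either ambient graph. Hence Theorem \ref{monomial} says that $x^\a \in \widetilde{J^t}\setminus J^t$ is equivalent to $\nu(\G_\a)<t$ together with $\nu(\G_\a-N_\a(i))\ge t-\deg_\a(i)$ for every $i\in\core(F)$. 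Splitting this last family of inequalities over $V_\a$ and $\core(F)\setminus V_\a$ recovers, respectively, the $t$-saturating hypothesis on $\G_\a$ and the extra hypothesis stated in the theorem.

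The remaining task is to replace the combinatorial containment $V_\a \subseteq \core(F)$ by the stated minimality of $F$. I expect this to be the main obstacle: one must verify that, assuming $N[V_\a]\subseteq F$, the minimality of $F$ among covers containing $N[V_\a]$ is equivalent to $\core(F)\subseteq N[V_\a]$. The key observation is that, for $j\in F$, the set $F\setminus\{j\}$ is a cover if and only if $j\in \core(F)$; consequently $F$ fails to be minimal precisely when some $j\in F\setminus N[V_\a]$ lies in $\core(F)$. For the forward direction of the theorem, $\a$ comes from the previous steps, $V_\a\subseteq\core(F)$ forces $N[V_\a]\subseteq F$, and Corollary \ref{dominating} applied to $J$ shows that $V_\a$ is a dominating set of $\G_{\core(F)}$, so $\core(F)\subseteq N[V_\a]$, which by the observation gives minimality. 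For the reverse direction, the minimality hypothesis presupposes $N[V_\a]\subseteq F$, which in turn forces $V_\a\subseteq\core(F)$ (the $\G$-neighbors of any vertex in $V_\a$ then lie in $F$), so the stated hypotheses on $\a$ provide exactly the matching inequalities needed for Theorem \ref{monomial} applied to $J$, yielding $x^\a\in\widetilde{J^t}\setminus J^t$ and hence $P_F\in\Ass(I^t)$.
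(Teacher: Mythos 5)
Your proposal is correct and follows essentially the same route as the paper's proof: reduce via Proposition \ref{core} to whether $\nn$ is associated to $J^t$ for $J=I(\G_{\core(F)})$, translate the existence of $x^\a\in\widetilde{J^t}\setminus J^t$ through Theorem \ref{monomial} (splitting the inequalities over $V_\a$ and $\core(F)\setminus V_\a$), and exchange the condition $V_\a\subseteq\core(F)$ for the minimality of $F$ among covers containing $N[V_\a]$ using Corollary \ref{dominating}. Your explicit observation that $F\setminus\{j\}$ is a cover exactly when $j\in\core(F)$ is the same fact the paper uses implicitly, so no further changes are needed.
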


\begin{proof}
By Proposition \ref{core}, $P_F$ is an associated prime ideal of $I^t$
 if and only if $\widetilde{J^t} \neq J^t$, where $J = I(\G_{\core(F)})$. 
The latter condition means that there exists $\a \in \NN^n$ 
with $V_\a \subseteq \core(F)$ such that $x^\a \in \widetilde{J^t} \setminus J^t$. 
Obviously, $(\G_{\core(F)})_{V_\a} = \G_{V_\a}$ and $(\G_{\core(F)})_\a = \G_\a$. 
By Theorem \ref{monomial}, $x^\a \in \widetilde{J^t} \setminus J^t$  
 if and only if $\G_\a$ is $t$-saturating and $\nu(\G_\a  - N_\a(i)) \ge t - \deg_\a(i)$ for all $i \in \core(F) \setminus V_\a$. It remains to show that the condition $V_\a \subseteq \core(F)$ can be replaced by the condition $F$ is minimal among the covers containing $N[V_\a]$. \par
If $V_\a \subseteq \core(F)$, then $N[V_\a] \subseteq F$ by the definition of $\core(F)$. As we have seen above, we may assume further assume that $x^\a \in \widetilde{J^t} \setminus J^t$. Then  $V_\a$ is a dominating set of $\G_{\core(F)}$ by Corollary \ref{dominating}. Hence $\core(F) \subseteq N[V_\a]$.  Thus, every vertex of $F \setminus N[V_\a]$ is adjacent to a vertex of $V \setminus F$. From this it follows that every set $F \setminus i$, $i \in F \setminus N[V_\a]$, is not a cover of $\G$. Therefore, $F$ is minimal among the covers containing $N[V_\a]$. Conversely,  if  $F$ is minimal among the covers containing $N[V_\a]$, then $N[V_\a] \subseteq F$. Hence every vertex of $V_\a$ is not adjacent to any vertex of $V \setminus F$. This implies $V_\a \subseteq \core(F)$. 
\end{proof}

By Theorem \ref{associated},  to find the embedded associated primes of $I^t$ we have  first to find the subsets $U \subseteq V$ such that there exists a $t$-saturating weighted graph on $U$. 
In the following we establish properties of $t$-saturating weighted graphs which can be used to detect them.  
\smallskip

In the following we denote by $\H$ a weighted graph on the vertex set $U$ and by $a_i$ the weight of a vertex $i \in U$. Recall that a vertex of is called a {\em leaf vertex} if it is adjacent to only a vertex of the graph.

\begin{lem} \label{weight} 
Let $\H$ be a $t$-saturating weighted graph. Then \par
{\rm (i) } $a_i < \min\{\deg_\H(i), \nu(\H)+1\}$, \par
{\rm (ii)} $a_i \ge 2$ if $i$ adjacent to a leaf vertex of the base graph of $\H$. 
\end{lem}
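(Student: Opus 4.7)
The plan is to derive both parts from a single matching-extension bound. Specifically, I first establish that for every vertex $i$ of $\H$,
\[
\nu(\H) \;\ge\; \nu(\H - N_\H(i)) \,+\, \min\{a_i,\, \deg_\H(i)\}.
\]
The idea is straightforward: take a maximum matching $M$ of $\H - N_\H(i)$; since all neighbors of $i$ have been removed, the vertex $i$ is isolated in $\H - N_\H(i)$, and $M$ uses neither $i$ nor any vertex of $N_\H(i)$. So the full capacity $a_i$ at $i$ and the full capacity $a_j$ at each $j \in N_\H(i)$ remain available. One can therefore append up to $\min\{a_i,\, \sum_{j \in N_\H(i)} a_j\} = \min\{a_i, \deg_\H(i)\}$ edges of the form $\{i,j\}$ to $M$, distributing $i$'s $a_i$ uses among the neighbors (each $j$ receiving at most $a_j$ of them), and the result remains a matching of $\H$.

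For part (i), I combine this estimate with the $t$-saturating hypotheses $\nu(\H - N_\H(i)) \ge t - \deg_\H(i)$ and $\nu(\H) < t$ to force $a_i < \deg_\H(i)$: if instead $a_i \ge \deg_\H(i)$, then the estimate gives $\nu(\H) \ge (t - \deg_\H(i)) + \deg_\H(i) = t$, contradicting $\nu(\H) < t$. Once $a_i < \deg_\H(i)$ is established, the estimate together with the trivial bound $\nu(\H - N_\H(i)) \ge 0$ yields $\nu(\H) \ge a_i$, which is the second bound $a_i < \nu(\H) + 1$.

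For part (ii), I apply part (i) to the leaf vertex $j$ of the base graph adjacent to $i$. Because $j$ is a leaf, $N_\H(j) = \{i\}$ and hence $\deg_\H(j) = a_i$. Then part (i) applied at $j$ gives $a_j < \deg_\H(j) = a_i$, and since all weights are positive integers I conclude $a_i \ge a_j + 1 \ge 2$.

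The only real obstacle is verifying the matching-extension estimate rigorously: one must check that distributing $i$'s capacity $a_i$ among the neighbors $j \in N_\H(i)$ (each with remaining capacity $a_j$) actually produces $\min\{a_i, \deg_\H(i)\}$ new edges without violating any weight constraint. This is a routine greedy distribution since $M$ leaves $i$ and every vertex of $N_\H(i)$ untouched, so it poses no genuine difficulty.
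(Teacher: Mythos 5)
Your proof is correct and follows essentially the same route as the paper: both establish the extension inequality $\min\{a_i,\deg_\H(i)\}+\nu(\H-N_\H(i))\le\nu(\H)$ by adding edges at $i$ to a maximum matching of $\H-N_\H(i)$, combine it with the $t$-saturating inequalities to get (i), and obtain (ii) by applying (i) at the leaf vertex, where $\deg_\H(j)=a_i$.
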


\begin{proof}
One can find a family of  $\min\{a_i, \deg_\H(i)\}$ edges of $\H$ containing $i$ such that  the appearing times of every vertex $j \in N_\H(i)$ does not exceed $a_j$. These edges can be added to an arbitrary matching of $\H - N_\H(i)$ to form a matching of $\H$. Therefore,
$$\min \{a_i, \deg_\H(i)\} + \nu(\H - N_\H(i)) \le \nu(\H).$$
By the definition of $t$-saturating weighted graph, $\deg_\H(i)+\nu(\H - N_\H(i) \ge t > \nu(\H)$.
Therefore, we must have  $a_i < \deg_\H(i)$ and $a_i + \nu(\H - N_\H(i)) \le \nu(\H)$. The latter inequality implies $a_i  <  \nu(\H)+1$. So we obtain (i).
If $i$ is adjacent to a leaf vertex $j$, then $a_i = \deg_\H(j)$. 
As shown above, $\deg_\H(j) > a_j$. Since $a_j \ge 1$, this implies (ii). 
\end{proof}

For the proof of the next result we need to extend some notations on simple graphs to weighted graphs.
Let $M$ be a matching of a weighted graph $\H$. 
For every vertex $i$ let $w_i$ be the appearing times of $i$ in the edges of $M$. 
We say that $i$ is an {\em unmatched vertex} of $M$ if $w_i < a_i$.
An {\em augmenting walk} of $M$ is a sequence of vertices and edges, where each edge's endpoints are the preceding and following vertices in the sequence, which satisfy the following conditions:\par
(i) The first and last vertices are unmatched vertices,\par
(ii) The appearing time of every vertex $i$ does not exceed $a_i$,\par
(iii) The number of edges is odd and the family of the even edges is contained in $M$.\par
With these notations we can easily extend Berge's theorem (see e.g. \cite[Theorem 1.2.1]{LM}) to weighted graph, which says that $|M| = \nu(\H)$ if and only if $M$ does not have any augmenting walk. We leave the reader to check this fact.

\begin{prop} \label{bound} 
Let $\H$ be a $t$-saturating weighted graph. Then $\sum_{i \in U}a_i \le 3(t-1)$.
\end{prop}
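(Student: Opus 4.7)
The plan is to reduce to the case of simple (unweighted) graphs and then apply the Gallai--Edmonds structure theorem.  Replace each vertex $i$ of $\H$ by $a_i$ independent copies and join copies of adjacent vertices to produce a simple graph $\H^*$; then $|V(\H^*)| = \sum_i a_i$, $\nu(\H^*)=\nu(\H)$, and a direct check from $N_{\H^*}(i^{(p)}) = \bigcup_{j \in N_\H(i)} \{j^{(q)} : 1 \le q \le a_j\}$ shows that $\H^*$ is $t$-saturating in the simple-graph sense if and only if $\H$ is.  Hence it suffices to prove $|V(\H^*)| \le 3(t-1)$ whenever $\H^*$ is a simple $t$-saturating graph.

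For such an $\H^*$, put $\nu = \nu(\H^*) \le t-1$ and take the Gallai--Edmonds decomposition $V(\H^*) = D \sqcup A \sqcup C$.  Writing $c(D)$ for the number of components of $\H^*[D]$ (each factor-critical), the standard identities $|V(\H^*)| = |D|+|A|+|C|$ and $\nu = |A| + \tfrac{1}{2}(|D|-c(D)) + \tfrac{1}{2}|C|$ combine to give
\[
|V(\H^*)| - 3\nu \;=\; \tfrac{1}{2}\bigl(3c(D)-|D|\bigr) - 2|A| - \tfrac{1}{2}|C|.
\]
The desired bound $|V(\H^*)| \le 3\nu \le 3(t-1)$ therefore follows once every component of $\H^*[D]$ is shown to have at least three vertices, so that $|D| \ge 3c(D)$.

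Suppose for contradiction that some $D$-component is a singleton $\{v\}$.  Since $v$ has no $D$-neighbors, $N(v) = \{u_1,\dots,u_d\} \subseteq A$, and Lemma~\ref{weight} gives $\deg(v) \ge 2$.  Pick a maximum matching $M$ missing $v$; by Gallai--Edmonds each $u_i$ is matched in $M$ to some $\phi(u_i) \in D$ in a distinct component, so the restriction of $M$ to $\H^* - N(v)$ has $\nu - d$ edges.  I would show this restriction is already maximum, so $\nu(\H^* - N(v)) = \nu - d$, contradicting the saturating inequality $\nu(\H^* - N(v)) \ge t - d \ge \nu+1-d$.  To rule out an augmenting path $\pi$ in $\H^* - N(v)$ for the restricted matching, note its endpoints lie in $(U_M\setminus\{v\}) \cup \{\phi(u_1),\dots,\phi(u_d)\}$ (since $v$ is isolated in $\H^* - N(v)$) and consider three cases: if both endpoints are in $U_M\setminus\{v\}$, then $\pi$ already augments $M$ in $\H^*$; if one is $\phi(u_i)$ and the other some $w\in U_M\setminus\{v\}$, prepending the edges $\{v,u_i\}$ and $\{u_i,\phi(u_i)\}$ extends $\pi$ to an $M$-augmenting path in $\H^*$ from $v$ to $w$; and if both endpoints are $\phi(u_i),\phi(u_j)$ with $i\ne j$, flipping $\pi$ and attaching $\{v,u_i\}$ together with the untouched matching edges $\{u_k,\phi(u_k)\}_{k\ne i,j}$ produces a maximum matching of $\H^*$ of size $\nu$ that avoids $u_j$, forcing $u_j \in D$ and contradicting $u_j \in A$.

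The main obstacle is the third case, where one must reassemble the flipped $\pi$, the new edge $\{v,u_i\}$, and the surviving edges $\{u_k,\phi(u_k)\}$ into a valid matching of $\H^*$; verifying that these pieces fit disjointly and produce a matching of size exactly $\nu$ relies on the Gallai--Edmonds property that distinct $u_k$ are matched to distinct $D$-components (so the $\phi(u_k)$ are all distinct and, for $k \ne i,j$, not touched by $\pi$).  Once the no-singleton lemma is in hand, the displayed computation immediately gives $|V(\H^*)| \le 3\nu \le 3(t-1)$, and unwinding the replication yields $\sum_{i \in V} a_i \le 3(t-1)$.
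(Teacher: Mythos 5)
Your argument is correct, but it takes a genuinely different route from the paper's. The paper works directly with the weighted graph: it extends Berge's augmenting-path theorem to weighted matchings, fixes a maximum matching $M$ of $\H$, and analyzes the augmenting walk created by raising the weight of a vertex $i$; this shows every vertex is missed by $M$ at most once, and that the first edges of these walks, taken over the deficient vertices $W$, are pairwise disjoint, so $|W|\le \nu(\H)$ and $\sum_i a_i = 2\nu(\H)+|W| \le 3\nu(\H) \le 3(t-1)$. You instead replicate each vertex into $a_i$ independent copies (which indeed preserves both the matching number and the $t$-saturating condition) and invoke the Gallai--Edmonds decomposition of the resulting simple graph $\H^*$, reducing the bound to the claim that no component of $\H^*[D]$ is a singleton; you then kill singletons with the saturation inequality at such a vertex $v$, all of whose neighbors lie in $A$. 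The steps check out, including your third case: for $k\ne i,j$ the vertex $\phi(u_k)$ is unsaturated by the restricted matching, hence cannot be an internal vertex of the augmenting path (internal vertices are matched) and by distinctness is not an endpoint, so your reassembled matching is a maximum matching of $\H^*$ missing $u_j\in A$, the desired contradiction. What each approach buys: yours makes the constant $3$ conceptually transparent (factor-critical $D$-components have at least three vertices) and reuses classical structure theory, at the cost of the blow-up and a heavier black box; the paper's proof stays with the weighted object and is self-contained apart from the weighted Berge theorem it states. Note also that your delicate case analysis can be bypassed entirely: for any $S\subseteq A$ the Berge--Tutte count applied to $(\H^*-S)-(A\setminus S)=\H^*[D]\cup\H^*[C]$ gives $\nu(\H^*-S)=\nu(\H^*)-|S|$, so taking $S=N(v)$ for a singleton component $\{v\}$ yields $\nu(\H^*-N(v))=\nu(\H^*)-\deg(v)<t-\deg(v)$ immediately, contradicting saturation.
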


\begin{proof}
Let $\G$ be the base graph of $\H$ and $\a$ the weight vector of $\H$. 
Then $\G_\a = \H$ and $N_\a(i) = N_\H(i)$ and $\deg_\a(i) = \deg_\H(i)$ for every $i \in U$. 
Choose a number $m \ge \deg_\a(i)$. We have seen in the proof of Theorem \ref{monomial} that
$\nu(\G_{\a + m\e_i}) = \deg_\a(i) + \nu(\G_\a - N_\a(i)).$
Since $\G_\a$ is $t$-saturating,  
$\deg_\a(i) + \nu(\G_\a - N_\a(i)) \ge t > \nu(\G_\a).$
Therefore, $\nu(\G_{\a+m\e_i}) > \nu(\G_\a)$. 
\par
Let $M$ be a matching of $\G_\a$ with $|M| = \nu(\G_\a)$. 
Consider $M$ as a matching of  $\G_{\a+m\e_i}$.
By the above mentioned weighted version of Berge's theorem, there is an augmenting walk $P$ of $M$.
Adding the odd edges of $P$ to $M$ and deleting the even edges of $P$ from $M$ we obtain a matching $M^*$ of $\G_{\a + m\e_i}$. It is clear that $M^*$ has $\nu(\G_\a)+1$ edges. 
Hence $M^*$ is not a matching of $\G_\a$. This has the following consequences on the vertex $i$.
First, since $\G_\a$ differs from $\G_{\a+m\e_i}$ only by the weight of $i$, $i$ must appear in $P$
and $w_i^* > a_i$, where $w_j^*$ denote the appearing times of a vertex $j$ in $M^*$.  
Let $i_f$ and $i_l$ denote the first and the last vertex of $P$.
Then $w_j^* = w_j \le a_j$ if $j \neq i_f, i_l$.
Therefore, $i = i_f$ or $i = i_l$. Without restriction we may assume that $i = i_f$.
If $i \neq i_l$, then $w_i^* = w_i+1 \le a_i+1$. Together with the condition $w_i^* > a_i$, this implies $w_i = a_i$.
Thus, if $w_i < a_i$, we must have $i = i_f = i_l$. In this case, $w_i^* = w_i+2 \le a_i +1$. Hence $w_i^* = a_i+1$, $w_i = a_i-1$, and $w_j^* = w_j \le a_j$ for $j \neq i$.
\par

Let $W$ denote the set of the vertices $i \in U$ with $w_i = a_i-1$. Then $w_i = a_i$ for $i \not\in W$.
Therefore,
$$\sum_{i \in U} a_i = \sum_{i \in U} w_i + |W| = 2|M| + |W| = 2\nu(\G_\a)+|W|.$$
The vertices of $W$ are not adjacent to each other because otherwise we could add any edge of $\G$ with endpoints in $W$ to $M$ to obtain a matching of $\G_\a$ with $\nu(\G_\a) +1$ edges.
For $i \in W$ we denote by $E_i$ the first edge of the augmenting walk $P$.
As we have seen above, $i \in E_i$. Let $h$ be the other endpoint of $E_i$.
Then every other vertex $j \in W$ is not adjacent to $h$ because otherwise we could replace $E_i$ by the edge
$\{j,h\}$ to obtain from $M^*$ a matching of $\G_\a$ with $\nu(\G_\a) +1$ edges.
This follows from the facts that $w_i^*  = a_i+1$, $w_j^* = w_j = a_j-1$. Therefore, $E_i \cap E_j = \emptyset$. From this it follows that the edges $E_i$, $i \in W$, form a matching of $\G$. 
Hence $|W| \le \nu(\G) \le \nu(\G_\a)$.  
So we obtain $\sum_{i \in U} a_i  \le 3\nu(\G_\a) \le 3(t-1)$. 
\end{proof}

\noindent {\em Remark}. The bound $\sum_{i \in U}a_i \le 3(t-1)$ is sharp as can be seen from the case $\H$ is the simple graph of $t-1$ disconnected triangles. \smallskip

By Theorem \ref{monomial}, we immediately obtain the following interesting consequence.

\begin{cor} 
If $x^\a \in  \widetilde{I^t} \setminus I^t$, then $\deg x^\a \le 3(t-1)$.
\end{cor}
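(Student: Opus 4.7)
The plan is to reduce the corollary directly to Proposition~\ref{bound} via Theorem~\ref{monomial}. Indeed, the hypothesis $x^\a \in \widetilde{I^t} \setminus I^t$ means, by Theorem~\ref{monomial}, that conditions (i) and (ii) hold for $\G_\a$. Restricting condition (ii) to the vertices $i \in V_\a$ gives precisely the definition of a $t$-saturating weighted graph, since $N_{\G_\a}(i) = N_\a(i)$ and $\deg_{\G_\a}(i) = \deg_\a(i)$ for $i \in V_\a$. Thus $\G_\a$ is a $t$-saturating weighted graph on the vertex set $V_\a$.

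Next, I would apply Proposition~\ref{bound} to the weighted graph $\G_\a$, which yields
\[
\sum_{i \in V_\a} a_i \le 3(t-1).
\]
Since $a_i = 0$ for every $i \in V \setminus V_\a$, the total degree of the monomial is
\[
\deg x^\a = \sum_{i=1}^n a_i = \sum_{i \in V_\a} a_i \le 3(t-1),
\]
which is the desired conclusion.

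There is no real obstacle here: the two earlier results have already done all the work. The only thing to verify is the bookkeeping that the vertex set of $\G_\a$ (as a weighted graph) is indeed $V_\a$ and that the weights are exactly the positive exponents $a_i$, so that the sum of weights appearing in Proposition~\ref{bound} coincides with $\deg x^\a$. Both facts are immediate from the definition of $\G_\a$ given at the start of Section~2.
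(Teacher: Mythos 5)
Your proof is correct and is exactly the argument the paper intends: Theorem~\ref{monomial} shows $\G_\a$ is $t$-saturating (the restriction of condition (ii) to $V_\a$ is precisely the definition, as the paper notes in its reformulation), and Proposition~\ref{bound} then bounds $\sum_{i\in V_\a} a_i = \deg x^\a$ by $3(t-1)$. Nothing further is needed.
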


\begin{lem} \label{odd cycle} 
The base graph of any $t$-saturating weighted graph contains at least one odd cycle of length $\le 2t-1$.  
\end{lem}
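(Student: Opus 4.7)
The plan is a proof by contradiction. Let $\H$ be a nonempty $t$-saturating weighted graph on a vertex set $U$ with base graph $\G$, and assume that $\G$ has no odd cycle of length $\le 2t-1$. Then either $\G$ is bipartite, or the shortest odd cycle of $\G$ has length at least $2t+1$; I rule out each case separately.

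If $\G$ contains an odd cycle $C$ of length $2k+1\ge 2t+1$, then taking every other edge around $C$ produces $k\ge t$ pairwise disjoint edges of $\G$. Since every weight $a_i\ge 1$, this family is also a matching of $\H$, giving $\nu(\H)\ge t$ and contradicting condition (i) of $t$-saturation. This disposes of the case of large odd girth.

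The substantive case is bipartite $\G$, and the key tool is a weighted version of K\"{o}nig's theorem: $\nu(\H)=\min_{C}\sum_{j\in C}a_j$, where the minimum is over vertex covers $C$ of $\G$. I would prove this by blowing up each vertex $i$ into $a_i$ independent copies and joining copies of adjacent vertices, obtaining a simple bipartite graph $\H^{*}$ with $\nu(\H^{*})=\nu(\H)$; a minimum vertex cover of $\H^{*}$ may be chosen ``homogeneous'' (either all or none of the copies of each $i$), and therefore corresponds to a minimum weighted vertex cover of $\G$, after which classical K\"{o}nig applies. Granting this, fix a minimum weighted cover $C$ with $\sum_{j\in C}a_j=\nu(\H)<t$. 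The bound $\nu(\H)\le\tfrac{1}{2}\sum_{i\in U}a_i$ (each matching edge occupies two vertex-slots), together with $U\ne\emptyset$, forces $C\subsetneq U$, so some $i\in U\setminus C$ exists. Since $C$ is a cover, $N_\H(i)\subseteq C$, and hence $C\setminus N_\H(i)$ is a vertex cover of the still-bipartite weighted graph $\H-N_\H(i)$ of weight $\nu(\H)-\deg_\H(i)$. A second application of the weighted K\"{o}nig theorem then gives $\nu(\H-N_\H(i))\le \nu(\H)-\deg_\H(i)<t-\deg_\H(i)$, contradicting condition (ii) of $t$-saturation at $i$.

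I expect the main obstacle to be the weighted K\"{o}nig step in the bipartite case; beyond the blow-up argument above (or the equivalent LP-duality statement for $b$-matchings), the rest of the proof is a short chase of the two inequalities defining $t$-saturation.
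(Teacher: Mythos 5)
Your proof is correct, but it takes a genuinely different route from the paper's. The paper's proof is a four-line reduction to commutative algebra: since the $t$-saturating weighted graph is $\G_\a$ with $V_\a = V$, Theorem \ref{monomial} gives $x^\a \in \widetilde{I^t}\setminus I^t$, hence $\widetilde{I^t}\neq I^t$, hence $I^{(t)}\neq I^t$, and then it cites Rinaldo--Terai--Yoshida (building on Simis--Vasconcelos--Villarreal) for the fact that $I^{(t)}\neq I^t$ forces an odd cycle of length $\le 2t-1$. You instead argue purely combinatorially by contraposition: if every odd cycle has length $\ge 2t+1$, alternate edges of such a cycle give $t$ disjoint edges and hence $\nu(\H)\ge t$, violating condition (i); if the base graph is bipartite, the weighted K\"onig--Egerv\'ary theorem (correctly justified either by your vertex blow-up with the homogenization of a minimum cover, or by total unimodularity/LP duality for $b$-matchings) yields a vertex cover $C$ of weight exactly $\nu(\H)<t$, and since $2\nu(\H)\le \sum_{i}a_i$ there is a vertex $i\notin C$, for which $N_\H(i)\subseteq C$ and the cover $C\setminus N_\H(i)$ of $\H-N_\H(i)$ gives $\nu(\H-N_\H(i))\le \nu(\H)-\deg_\H(i)<t-\deg_\H(i)$ (here only weak duality is needed), violating condition (ii). The trade-off: the paper's argument is shorter given the cited literature and stays within the saturation framework already built, while yours is self-contained, avoids symbolic powers altogether, and makes explicit the K\"onig-type duality that in fact underlies the algebraic results the paper invokes; the only points to state carefully are the nonemptiness of $\H$ and the weighted K\"onig step, both of which you handle.
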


\begin{proof}
Let $\G$ be the base graph of a $t$-saturating weighted graph $\H$.
Then there is $\a \in \NN^n$ such that $\H = \G_\a$.
Since $V = V_\a$, $x^\a \in  \widetilde{I^t} \setminus I^t$ by Theorem \ref{monomial}. 
Hence $\widetilde{I^t} \neq I^t$. Let $I^{(t)}$ denotes the $t$-th symbolic power of $I$, which is the intersection of the primary components of the minimal associated primes of $I^t$. Then $I^{(t)} \supseteq \widetilde{I^t}$. Hence $I^{(t)} \neq I^t$. By \cite[Lemma 3.10]{RTY1} (see also [12, Lemma 5.8, Theorem 5.9]), this implies that $\G$ has at least one odd cycle of length  $\le 2t-1$.
\end{proof}

\noindent {\em Remark}. A 2-saturating weighted graph must be the graph of a triangle because it contains a triangle by Lemma \ref{odd cycle} and any other weighted graph containing a triangle has matching number at least 2. \smallskip

By definition, if $\H$ is a $t$-saturating weighted graph, then $\H$ is $s$-saturating for $s = \nu(\H) +1$.
This property is preserved by the connected components of $\H$ in the following sense.

\begin{lem} \label{component} 
Let $\H_1,...,\H_m$ be the connected components of  a weighted graph $\H$. Let $t = \nu(\H)+1$ and $t_i = \nu(\H_i)+1$, $i = 1,...,m$. Then $\H$ is $t$-saturating if and only if $\H_i$ is $t_i$-saturating for $i = 1,...,m$.
\end{lem}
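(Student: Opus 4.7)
The plan is to verify the equivalence directly from the definition of $t$-saturating, by unpacking how matching numbers and neighborhoods behave under taking connected components.

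First I would record the basic structural facts. Since $\H_1,\ldots,\H_m$ are the connected components of $\H$, their vertex sets $U_1,\ldots,U_m$ partition $U$. For any vertex $j \in U_i$, all neighbors of $j$ in $\H$ already lie in $U_i$, so $N_\H(j)=N_{\H_i}(j)$ and consequently $\deg_\H(j)=\deg_{\H_i}(j)$. Moreover, a matching of a disjoint union of weighted graphs decomposes uniquely into matchings in each component and vice versa, hence
\[
\nu(\H)=\sum_{k=1}^m \nu(\H_k),\qquad
\nu(\H-N_\H(j)) \;=\; \nu(\H_i-N_{\H_i}(j)) \;+\; \sum_{k\neq i}\nu(\H_k)
\]
for each $j\in U_i$ (removing $N_\H(j)=N_{\H_i}(j)$ only affects the $i$-th component).

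Next I would write out the saturating condition for $\H$ and simplify. Because $t=\nu(\H)+1$, the inequality $\nu(\H)<t$ is automatic. For $j\in U_i$, the condition $\nu(\H-N_\H(j))\ge t-\deg_\H(j)$ becomes, using the two displays above,
\[
\nu(\H_i-N_{\H_i}(j)) + \sum_{k\neq i}\nu(\H_k) \;\ge\; \sum_{k=1}^m\nu(\H_k)+1-\deg_{\H_i}(j),
\]
and after cancelling the $\sum_{k\neq i}\nu(\H_k)$ on both sides this reduces exactly to
\[
\nu(\H_i-N_{\H_i}(j)) \;\ge\; \nu(\H_i)+1-\deg_{\H_i}(j) \;=\; t_i-\deg_{\H_i}(j),
\]
which is the defining condition for $\H_i$ to be $t_i$-saturating (the inequality $\nu(\H_i)<t_i$ is again automatic). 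Running this equivalence over all $j\in U_i$ and all $i$ yields the lemma in both directions.

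There is no real obstacle here; the only thing to be careful about is making sure that the identity $\nu(\H-N_\H(j))=\nu(\H_i-N_{\H_i}(j))+\sum_{k\neq i}\nu(\H_k)$ is justified from the weighted matching definition, which follows because the graph $\H-N_\H(j)$ is the disjoint union of $\H_i-N_{\H_i}(j)$ with the untouched components $\H_k$, $k\neq i$, so a matching is simply a choice of independent matchings in each piece.
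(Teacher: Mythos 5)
Your proof is correct and takes essentially the same route as the paper: both arguments rest on the additivity of the weighted matching number over connected components together with the identity $N_\H(j)=N_{\H_i}(j)$ (hence $\deg_\H(j)=\deg_{\H_i}(j)$) for $j$ in the $i$-th component, so that the saturating inequality at each vertex reduces, after cancelling $\sum_{k\neq i}\nu(\H_k)$, to the corresponding inequality for $\H_i$ with threshold $t_i$. The only cosmetic difference is that you phrase it as a single chain of equivalences valid vertex by vertex, whereas the paper writes out the two implications separately.
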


\begin{proof}
It is easy to see that $\nu(\H) = \sum_{i =1}^m \nu(\H_i)$. 
Hence $t - 1 = \sum_{i =1}^m(t_i - 1)$.
Assume that $\H$ is $t$-saturating. Then $\deg_\H(i) + \nu(\H - N_\H(i))  \ge t$ for every vertex $i$ of $\H$.
If $i$ is a  vertex of $\H_1$, we have $N_\H(i) = N_{\H_1}(i)$. Hence $\deg_{\H_1}(i) = \deg_\H(i)$.
Moreover, the connected components of $\H - N_\H(i)$ are $\H_1  - N_{\H_1}(i)$ 
and $\H_2,...,\H_m$. Therefore,  
$$\nu(\H - N_\H(i)) = \nu(\H_1  - N_{\H_1}(i)) + \sum_{j=2}^m \nu(\H_i) = \nu(\H_1  - N_{\H_1}(i)) + \sum_{j=2}^m(t_j-1).$$
From this it follows that
$$\deg_{\H_1}(i) +  \nu(\H_1  - N_{\H_1}(i)) 
=  \deg_\H(i) + \nu(\H - N_\H(i)) - \sum_{j=2}^m(t_j-1) \ge t - \sum_{j=2}^m(t_j-1) = t_1.$$
Hence $\H_1$ is $t_1$-saturating. 
Similarly, $\H_i$ is $t_i$-saturating for $i = 2,...,m$ as well.
\par

Conversely, assume that $\H_i$ is $t_i$-saturating for $i = 1,...,m$.
Let $i$ be an arbitrary vertex of $\H$.  Without loss of generality we may assume that $i$ is a vertex of $\H_1$.
Then $\deg_{\H_1}(i) +  \nu(\H_1  - N_{\H_1}(i)) \ge t_1$. Therefore,
$$\deg_\H(i) + \nu(\H - N_\H(i)) = \deg_{\H_1}(i) +  \nu(\H_1  - N_{\H_1}(j)) + \sum_{j=2}^m(t_j-1)
\ge t_1 + \sum_{j=2}^m (t_j-1) = t.$$
Hence $\H$ is $t$-saturating, as required.
\end{proof}

Using the above properties of $t$-saturating weighted graphs we obtain the following necessary condition for $P_F$ to be an embedded associated prime of $I^t$.  

\begin{Theorem} \label{embedded}
Let $P_F$ be an arbitrary embedded associated prime of $I^t$. Then $F$ is minimal among the covers of $\G$ containing $N[U]$ for a subset $U$ of $V$ such that every connected component of the induced graph $\G_U$ contains at least one odd cycle of length $\le 2t-1$. 
\end{Theorem}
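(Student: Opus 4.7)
The plan is to deduce Theorem \ref{embedded} directly from Theorem \ref{associated} combined with the two structural lemmas on $t$-saturating weighted graphs (Lemma \ref{component} and Lemma \ref{odd cycle}). Essentially, the vector $\a$ provided by Theorem \ref{associated} already gives the candidate set $U := V_\a$, and the two lemmas supply the odd-cycle condition component by component.

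More precisely, since $P_F$ is an embedded associated prime of $I^t$, Theorem \ref{associated} furnishes a vector $\a \in \NN^n$ such that $\G_\a$ is $t$-saturating and $F$ is minimal among the covers of $\G$ containing $N[V_\a]$. Set $U := V_\a$. Then $\G_U$ is precisely the base graph of $\G_\a$, so its connected components $\G_1,\dots,\G_m$ are the base graphs of the connected components $\H_1,\dots,\H_m$ of $\G_\a$. By Lemma \ref{component}, each $\H_i$ is $t_i$-saturating with $t_i = \nu(\H_i)+1$. Applying Lemma \ref{odd cycle} to $\H_i$, the base graph $\G_i$ contains an odd cycle of length at most $2t_i-1$.

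It remains to check $t_i \le t$, so that these odd cycles have length $\le 2t-1$. This is immediate: since $\H_i$ is a weighted subgraph of $\G_\a$, every matching of $\H_i$ is a matching of $\G_\a$, so $\nu(\H_i) \le \nu(\G_\a) < t$ by the $t$-saturating condition (i) of Theorem \ref{monomial}. Hence $t_i \le t$, and every connected component of $\G_U$ contains an odd cycle of length $\le 2t-1$, as required.

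I do not expect any serious obstacle here; the work was done in the preceding results. The only point to be careful about is the identification of connected components of $\G_\a$ with those of $\G_U$, which is transparent since $\G_\a$ and $\G_U = \G_{V_\a}$ share the same edges and vertex set (only the weights differ, and weights do not affect connectivity).
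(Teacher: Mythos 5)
Your proposal is correct and follows essentially the same route as the paper: Theorem \ref{associated} supplies the $t$-saturating weighted graph $\G_\a$ with $U = V_\a$, and then Lemma \ref{component} together with Lemma \ref{odd cycle} gives the odd cycle in each component (the paper states this in one line, while you also make explicit the harmless points that $\G_\a$ is $(\nu(\G_\a)+1)$-saturating so the lemma applies, and that $t_i \le t$ so the cycle length is $\le 2t-1$).
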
 

\begin{proof}
By Theorem \ref{associated}, an embedded associated prime of $I^t$ is of the form $P_F$, where $F$ is minimal among the covers of $\G$ containing $N[V_\a]$ for a $t$-saturating weighted graph $\G_\a$. By Lemma \ref{odd cycle} and Lemma \ref{component},  every connected component of the induced graph $\G_{V_\a}$ must contain at least one odd cycle of length $\le 2t-1$.
\end{proof}

It is easy to find examples showing that the above condition is not sufficient for $P_F$ to be an embedded associated prime of $I^t$.  
\smallskip

\begin{ex}
{\rm Let $\G$ be the graph of the edges 
$\{1,2\},\{1,3\},\{2,3\},\{3,4\},\{4,5\}$
and $U=\{1,2,3,4\}.$ Then $N[U] = V = \{1,2,3,4,5\}$. Hence $V$ is the only cover containing $N[U]$. However, $P_V = \mm$ is not an associated prime of $I^2$. For, if $\mm$ is an associated prime of $I^2$, then by Theorem \ref{associated}, $V$ is minimal among the covers containing $N[V_\a]$ for some $\a \in \NN^5$ such that $\G_\a$ is 2-saturating. This condition implies that $\G_\a$ is a triangle. Hence $V_\a = \{1,2,3\}$. However, $V$ is not a minimal cover containing $N[V_\a] = \{1,2,3,4\}$.}
\end{ex}

\section{Strongly saturating weighted graphs}

Let $I$ be the edge ideal of a simple graph $\G$ as in the previous section. The aim of this section is to introduce a special class of weighted graphs whose existence on $\G$ gives rise to embedded associated primes of $I^t$, $t \ge 2$. Note that the existence of a $t$-saturating weighted graph on $\G$ alone does not necessarily leads to an embedded associated prime of $I^t$. By Theorem \ref{associated} we need a further condition on vertices outside such a weighted graph.
\smallskip

We call a weighted graph $\H$ {\em strongly $t$-saturating} if $\nu(\H) < t$ and 
$\nu(\H - j) \ge t - a_j$  for all vertices $j$ of $\H$, where $a_j$ is the weight of $j$. 
There many strongly $t$-saturating graphs (whose vertices have weight 1). For instance, it is easy to check that the union of $t-1$ triangles which are disconnected or which meet each other at a common vertex is strongly $t$-saturating. Another examples are graphs on $2t-1$ vertices which contain an odd cycle of length $2t-1$.\smallskip 

Recall that $\H$ is $t$-saturating if $\nu(\H) < t$ and $\nu(\H - N_\H(i)) \ge t - \sum_{j \in N_\H(i)}a_j$ for all vertices $i$ of $\H$. Then a strongly $t$-saturating weighted graph is $t$-saturating by the following property.

\begin{lem} \label{strong property} 
Let $\H$ be a strongly $t$-saturating weighted graph $\H$. 
Let $N$ be a non-empty set of vertices of $\H$. 
Then $\nu(\H - N) \ge t - \sum_{j \in N}a_j$.
\end{lem}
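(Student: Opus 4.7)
The plan is to prove the statement by induction on $|N|$. The base case $|N|=1$ is exactly the defining condition $\nu(\H - j) \ge t - a_j$ for a strongly $t$-saturating graph, so nothing needs to be done there. For the inductive step with $|N| \ge 2$, I would write $N = N' \cup \{j\}$ with $j \in N$ and $N' \subsetneq N$ non-empty, apply the induction hypothesis to $N'$ (inside $\H$, which is still strongly $t$-saturating by assumption) to get $\nu(\H - N') \ge t - \sum_{i \in N'} a_i$, and then compare $\nu(\H - N)$ with $\nu(\H - N') - a_j$.

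The key combinatorial ingredient is the following general fact: for any weighted graph $\H'$ and any vertex $j$ of $\H'$, one has
\[
\nu(\H' - j) \;\ge\; \nu(\H') - a_j.
\]
To prove this, take a matching $M$ of $\H'$ with $|M| = \nu(\H')$; by the matching condition the vertex $j$ appears in at most $a_j$ edges of $M$ (counted with multiplicity, since $M$ is a family of edges and not necessarily a set), so deleting these edges from $M$ removes at most $a_j$ elements and leaves a family $M'$ of edges not incident to $j$. Every other vertex $i$ appears in $M'$ at most as often as in $M$, hence at most $a_i$ times, so $M'$ is a matching of $\H' - j$ of cardinality at least $\nu(\H') - a_j$. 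Applying this with $\H' = \H - N'$ and chaining inequalities yields
\[
\nu(\H - N) \;\ge\; \nu(\H - N') - a_j \;\ge\; t - \sum_{i \in N'} a_i - a_j \;=\; t - \sum_{i \in N} a_i,
\]
completing the induction.

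I do not expect any real obstacle. The argument is routine weighted-matching bookkeeping; the only point that requires some care is that a matching in the sense of this paper is a multiset of edges, so ``the number of edges of $M$ containing $j$'' and ``the number of edges deleted from $M$'' both have to be interpreted with multiplicity in order for the bound by $a_j$ to be correct. Once this is set up, the deletion step and the induction go through mechanically.
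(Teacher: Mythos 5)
Your proof is correct, and it rests on essentially the same idea as the paper's: reduce to the defining inequality $\nu(\H - i) \ge t - a_i$ for a single vertex, and observe that deleting vertices can lower the weighted matching number by at most the total weight deleted. The paper removes all of $N \setminus \{i\}$ in one step by splitting a maximum matching of $\H - i$ into edges meeting $N \setminus \{i\}$ and edges of $\H - N$, whereas you iterate a one-vertex deletion bound by induction on $|N|$ — an organizational rather than conceptual difference.
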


\begin{proof}
Choose a vertex $i \in N$. Let $\H'$ be the weighted subgraph of $\H - i$ 
whose edges contain at least one vertex in $N \setminus i$ 
and whose vertices have the same weight as in $\H$. 
Then the number of edges of a matching of $\H'$ can not exceed the appearing times of the vertices of $N \setminus i$ in these edges.  Hence $\nu(\H') \le \sum_{j \in N}a_j - a_i$. 
Since every matching of $\H - i$ is the disjoint union of a matching of $\H'$ and a matching of $\H - N$, we have 
$$\nu(\H - i) \le \nu(\H') + \nu(\H - N) \le \sum_{j \in N}a_j - a_i  + \nu(\H - N).$$
This implies $\nu(\H - N) \ge  \nu(\H - i) + a_i - \sum_{j \in N}a_j$.
Since $\H$ is strongly $t$-saturating, $\nu(\H - i) \ge t - a_i$. 
Therefore, $\nu(\H - N) \ge t - \sum_{j \in N}a_j$.
\end{proof}

By definition, if $\H$ is a strongly $t$-saturating weighted graph, then $\H$ is strongly $s$-saturating for $s =\nu(\H)+1$. This property is preserved by the connected components of $\H$ as follows.

\begin{lem} \label{strong component} 
Let $\H_1,...,\H_m$ be the weighted connected components of  a weighted graph $\H$. 
Let $t = \nu(\H)+1$ and $t_i = \nu(\H_i)+1$, $i = 1,...,m$. 
Then $\H$ is strongly $t$-saturating if and only if 
$\H_i$ is strongly $t_i$-saturating for  $i = 1,...,m$.
\end{lem}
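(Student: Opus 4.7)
The plan is to mirror the proof of Lemma \ref{component}, exploiting that the strong-saturation condition involves removal of a single vertex $j$ at a time, so it restricts cleanly to the component containing $j$. The arithmetic backbone is the additivity $\nu(\H) = \sum_{i=1}^m \nu(\H_i)$, which gives the partition identity $t - 1 = \sum_{i=1}^m (t_i - 1)$.

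First I would fix a vertex $j$ of $\H$ and, after relabelling, assume $j \in \H_1$. Since deleting $j$ leaves $\H_2, \ldots, \H_m$ untouched, the graph $\H - j$ decomposes as the disjoint union of $\H_1 - j, \H_2, \ldots, \H_m$, and additivity of matching numbers yields
\[
\nu(\H - j) = \nu(\H_1 - j) + \sum_{k=2}^m (t_k - 1).
\]
Combined with $t = t_1 + \sum_{k=2}^m (t_k - 1)$, the inequality $\nu(\H - j) \ge t - a_j$ is \emph{equivalent} to $\nu(\H_1 - j) \ge t_1 - a_j$.

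For the ``only if'' direction I assume $\H$ is strongly $t$-saturating. The above equivalence, applied at every vertex of each component $\H_i$, delivers $\nu(\H_i - j) \ge t_i - a_j$, and $\nu(\H_i) < t_i$ holds by the very definition of $t_i$; hence each $\H_i$ is strongly $t_i$-saturating. The ``if'' direction reads the same equivalence in the opposite direction, and the global bound $\nu(\H) = \sum (t_i - 1) = t - 1 < t$ is recovered by additivity.

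I do not foresee any real obstacle: the proof is bookkeeping on the partition identity. The one point worth handling explicitly is the precise choice $t_i = \nu(\H_i) + 1$, which turns the exchange between $t - a_j$ and $t_1 - a_j$ into an exact arithmetic identity and thereby makes the reduction bidirectional. Compared with Lemma \ref{component}, the present argument is in fact cleaner, because removing a single vertex never drags in the neighbors of a different component, so there is no subtlety about $N_\H(j)$ straddling several $\H_i$.
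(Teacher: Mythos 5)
Your proof is correct and takes the same route the paper intends: the paper omits the proof of this lemma, stating only that it is similar to Lemma \ref{component}, and your argument is exactly that adaptation via additivity of $\nu$ over components and the identity $t-1=\sum_{i}(t_i-1)$. Your observation that the single-vertex deletion makes the reduction even cleaner than in Lemma \ref{component} (no neighborhood straddling components) is also accurate.
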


\begin{proof}
The proof is similar to that of Lemma \ref{component}. Hence we omit it.
\end{proof}

The following sufficient condition for an embedded associated prime of $I^t$ shows that if there exists a strongly $t$-saturating weighted graph on a subset of $V$, then we can construct an embedded prime of $I^t$.

\begin{Theorem} \label{strong} 
Let $F$ be a cover of $\G$ which is minimal among the covers containing $N[U]$ for a set $U \subseteq V$ such that there exists a strongly $t$-saturating weighted graph on $U$. Then $P_F$ is an embedded associated prime of $I^t$. 
\end{Theorem}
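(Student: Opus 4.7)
The plan is to derive the embeddedness of $P_F$ directly from Theorem \ref{associated}. Starting from the given strongly $t$-saturating weighted graph $\H$ on $U$ with weights $(a_i)_{i \in U}$, I will define $\a \in \NN^n$ by extending with $a_i = 0$ for $i \notin U$. Then $V_\a = U$, the weighted graph $\G_\a$ coincides with $\H$, and $N[V_\a] = N[U]$. Since $F$ is by hypothesis minimal among the covers containing $N[U]$, Theorem \ref{associated} will apply once the two inner conditions hold: that $\G_\a$ is $t$-saturating, and that $\nu(\G_\a - N_\a(i)) \ge t - \deg_\a(i)$ for every $i \in \core(F) \setminus V_\a$.

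Both conditions should reduce to Lemma \ref{strong property}. For the first, $\nu(\H) < t$ is part of the definition of strongly $t$-saturating, and for each $i \in V_\a$ the non-empty neighborhood $N = N_\H(i)$ (we may assume $\H$ has no isolated vertex, since the $t$-saturating condition would in any case force this) gives $\nu(\H - N_\H(i)) \ge t - \deg_\H(i)$, exactly the inequality needed. For the second condition, the essential step is to verify $N_\a(i) \neq \emptyset$ for $i \in \core(F) \setminus U$, after which a second application of Lemma \ref{strong property} delivers the bound. This non-emptiness will be extracted from the minimality of $F$: if some $i \in \core(F)$ lay outside $N[U]$, then $F \setminus \{i\}$ would still be a cover of $\G$ --- every neighbor of $i$ already sits in $F$ by definition of $\core(F)$, so every edge through $i$ remains covered --- and it would still contain $N[U]$, contradicting minimality. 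Hence $\core(F) \subseteq N[U]$, so that $\core(F) \setminus U \subseteq N(U)$ and $N_\a(i) = N_\G(i) \cap U$ is non-empty.

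The conclusion of Theorem \ref{associated} then identifies $P_F$ as an embedded associated prime of $I^t$. Note that $\core(F) \neq \emptyset$ is automatic, since $U \subseteq \core(F)$: every neighbor of a vertex of $U$ already lies in $N[U] \subseteq F$, so no vertex of $U$ is adjacent to anything in $V \setminus F$. I expect the main hurdle to be precisely the minimality argument producing $\core(F) \subseteq N[U]$, since this is what bridges the global hypothesis on $F$ with the local neighborhood conditions of Theorem \ref{associated} for vertices of $\core(F)$ outside $V_\a$; everything else is a clean application of Lemma \ref{strong property} to suitable vertex subsets of $\H$.
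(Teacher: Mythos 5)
Your proposal is correct and takes essentially the same route as the paper: reduce to Theorem \ref{associated}, apply Lemma \ref{strong property} twice (once with $N=N_\H(i)$ to see that a strongly $t$-saturating graph is $t$-saturating, once for the vertices $i\in\core(F)\setminus V_\a$), and derive $\core(F)\subseteq N[U]$, hence $N_\a(i)\neq\emptyset$, from the minimality of $F$. The only difference is cosmetic: you spell out the minimality argument (which the paper delegates to the proof of Theorem \ref{associated}) and check $\core(F)\neq\emptyset$ explicitly, both of which are fine.
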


\begin{proof}
Let $\a \in \NN^n$ such that $V_\a = U$ and $\G_\a$ is a strongly $t$-saturating weighted graph.
By Theorem \ref{associated} it suffices to show that 
$\nu(\G_\a  - N_\a(i)) \ge t - \deg_\a(i)$ for $i \in \core(F) \setminus V_\a$. 
As in the proof of Theorem \ref{associated}, 
the assumption on $F$ implies $\core(F) \subseteq N[V_\a]$. 
Hence $i$ is adjacent to at least one vertex $j \in V_\a$. 
This means $N_\a(i) \neq \emptyset$. 
Since $\deg_\a(i) = \sum_{j \in N_\a(i)}a_j$, the above inequality follows from Lemma \ref{strong property}.
\end{proof}

We don't know whether the above sufficient condition is also necessary. \smallskip

In the following we are interested in strongly $s$-saturating subgraphs of $\G$ on $2s-1$ vertices for some $s < t$. 
The reason is that we can add edges to such a subgraph to obtain strongly $t$-saturating weighted graphs. 
The idea originates from a construction of Chen, Morey and Sung in \cite[Theorem 3.7]{CMS}.

\begin{lem} \label{existence}
Let $U$ be a subset of $V$ such that $\G_U$ is connected and contains a strongly $s$-saturating graph on $2s-1$ vertices. Then there exists  a strongly $t$-saturating weighted graph $\H$ on $U$ with $\nu(\H) = t-1$ for all $t \ge |U| - s +1$.
\end{lem}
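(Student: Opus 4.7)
The plan is to induct on $|U|$, enlarging the given strongly $s$-saturating subgraph $\H_0$---which lives on some distinguished $U_0 \subseteq U$ with $|U_0| = 2s-1$---one vertex at a time. Note that since strongly $s$-saturating weighted graphs cannot contain isolated vertices (an isolated vertex $i$ with $N_\H(i) = \emptyset$ would force $\nu(\H) \ge t$), the graph $\H_0$ is supported on all of $U_0$ and in fact $\nu(\H_0) = s-1$.

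\textbf{Base case} $(|U| = 2s-1)$. Here $U = U_0$ and the constraint $t \ge |U| - s + 1$ collapses to $t \ge s$. For $t = s$ take $\H = \H_0$. For $t > s$ I would run a secondary induction on $t$: given a strongly $t$-saturating weighted graph $\H$ on $U_0$ with $\nu(\H) = t-1$, boost it to a strongly $(t+1)$-saturating graph with $\nu(\H) = t$ by incrementing the weight of a suitably chosen vertex lying on an odd cycle of the base graph of $\H$ (such a cycle exists by Lemma \ref{odd cycle}). A direct matching-number calculation, in the spirit of the augmenting-walk argument used in Proposition \ref{bound}, should show that this increment raises $\nu(\H)$ by exactly one and preserves all of the strong-saturation inequalities.

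\textbf{Inductive step} $(|U| > 2s-1)$. Since $\G_U$ is connected and $U_0 \subsetneq U$, I would select $w \in U \setminus U_0$ to be a vertex of maximum distance from $U_0$ in a BFS rooted at the super-source $U_0$. For any other vertex $v \ne w$ the BFS parent-chain of $v$ strictly decreases in distance and therefore avoids $w$ (whose distance is maximal), so $\G_{U \setminus \{w\}}$ is connected and still contains $\H_0$. Setting $U' := U \setminus \{w\}$, the induction hypothesis applied to $U'$ produces, for every $t' \ge |U'| - s + 1 = |U| - s$, a strongly $t'$-saturating weighted graph $\H'$ on $U'$ with $\nu(\H') = t'-1$. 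Given $t \ge |U| - s + 1$, apply this with $t' := t-1$ to get $\H'$ on $U'$ with $\nu(\H') = t-2$. Pick a neighbor $u \in U'$ of $w$ in $\G_U$ and append $w$ to $\H'$ along the edge $\{u,w\}$, assigning $w$ a suitable weight $a_w$ and possibly incrementing $a_u$ by one, so that $\nu(\H) = \nu(\H') + 1 = t - 1$.

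\textbf{Main obstacle.} The delicate part is verifying that the resulting $\H$ is strongly $t$-saturating, i.e.\ that $\nu(\H - j) \ge t - a_j$ holds for every vertex $j \in U$. I expect the verification to split into three regimes: for $j \in U' \setminus \{u\}$, the bound descends from the strongly $(t-1)$-saturating property of $\H'$ together with the matching contribution of the new edge $\{u,w\}$; for $j = u$, whose weight has been altered, one must estimate $\nu(\H - u)$ directly, typically by combining a maximum matching of $\H' - u$ with the adjusted count at $w$; and for $j = w$ one uses $\nu(\H - w) = \nu(\H')$ together with the chosen value of $a_w$. Balancing $a_w$ and the increment of $a_u$ against all three families of inequalities is the heart of the proof; the appropriate choice is dictated by the ear-addition construction of Chen--Morey--Sung \cite[Theorem 3.7]{CMS}, transposed to the weighted setting with the aid of Lemmas \ref{strong property} and \ref{strong component}.
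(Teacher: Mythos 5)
Your overall strategy (grow the given strongly $s$-saturating subgraph vertex by vertex along the connectivity of $\G_U$, then raise $t$ by further weight increments) is in substance the paper's edge-addition construction, but two of the steps you actually specify do not work as stated. First, the base-case mechanism is wrong: you propose to pass from a strongly $t$-saturating $\H$ with $\nu(\H)=t-1$ and total weight $2t-1$ to a strongly $(t+1)$-saturating one by incrementing the weight of a \emph{single} vertex. After such an increment the total weight is $2t$, so if $\nu(\H)=t$ then for every vertex $j$ one has $\nu(\H-j)\le\lfloor(2t-a_j)/2\rfloor=t-\lceil a_j/2\rceil$, and the requirement $\nu(\H-j)\ge t+1-a_j$ forces $a_j\ge 2$ for \emph{every} vertex --- impossible while most weights are $1$ (concretely: the triangle with weights $(1,1,1)$ is strongly $2$-saturating, but after raising one weight to get $(2,1,1)$ the deletion of a weight-one vertex leaves matching number $1<3-1$, so it is not strongly $3$-saturating). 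The paper's Claim instead increments the weights of \emph{both} endpoints of an edge $\{h,j\}$ with $h\in V_\b$, so the total weight stays odd and equal to $2t-1$; this is not a cosmetic difference but exactly what makes the counting possible.

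Second, in your inductive step (peel off a far vertex $w$, re-attach it along $\{u,w\}$) the verification you defer is precisely where your induction hypothesis is too weak. The paper's Claim carries, besides strong saturation, the extra invariant $\nu(\G_{\b-\e_i})=\nu(\G_\b)$ for every $i\in V_\b$, and uses it at the exact spot your three-regime check gets stuck: to verify $\nu(\H-w)\ge t-a_w$ at the new vertex (with $a_w=1$) one must show $\nu(\G_{\b+\e_u})\ge t-1$, which the paper obtains by writing $\G_{\b+\e_u}=\G_{(\b-\e_v)+\e_u+\e_v}$ for a neighbour $v$ of $u$ in $V_\b$ and invoking $\nu(\G_{\b-\e_v})=t-2$. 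From strong $(t-1)$-saturation alone you only get $\nu(\G_{\b-\e_v})\ge (t-1)-b_v$, which is insufficient whenever $b_v\ge2$ (and weights $\ge2$ do occur along the construction). Since your induction hypothesis records only ``strongly $t'$-saturating with $\nu=t'-1$'', this inequality is out of reach, and the same invariant is needed to keep the iteration going; you acknowledge that balancing $a_w$ and the increment of $a_u$ is ``the heart of the proof'' but leave it as an expectation referred to Chen--Morey--Sung. So the proposal has a genuine gap: the only fully specified mechanism (single-weight increments) fails, and the key verification is neither carried out nor supported by a strong enough inductive statement.
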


\begin{proof} 
We can reformulate the assertion as there exists $\a \in \NN^n$ with $V_\a = U$ such that $\G_\a$ is a strongly $t$-saturating weighted graph with $\nu(\G_\a) = t-1$ for all $t \ge |U| - s +1$. 
By the assumption, there exists a strongly $s$-saturating graph $C$ on a set $W \subseteq U$ of $2s-1$ vertices. 
Since $s > \nu(C) \ge \nu(C-i) \ge s-1$, we have $\nu(C) = \nu(C - i) = s-1$ for all $i \in W$. 
Since $C$ is a subgraph of the induced graph $\G_W$, $\nu(\G_W) \ge \nu(C) = s-1$.
On the other hand, $\nu(\G_W) \le s-1$ because $\G_W$ has $2s-1$ vertices. Therefore, $\nu(\G_W) = s-1$.
Since $\nu(\G_W) \ge \nu(\G_W-i) \ge \nu(C-i) = s-1$, we also have $\nu(\G_W - i) = s-1$ for all $i \in W$.
Therefore, $\G_W$ is a strongly $s$-saturating graph. 
Let $\c \in \NN^n$ such that $\G_\c = \G_W$. 
Starting from $\G_\c$  we can build up a weighted graph $\G_\a$ as in the assertion by using the following claim.
 \smallskip

\noindent {\em Claim}.
Let $\b \in \NN^n$ such that $\G_\b$ is a strongly $t$-saturating weighted graph with $\sum_{i=1}^nb_i =2t-1$ and $\nu(\G_\b) = \nu(\G_{\b-\e_i}) = t-1$ for all $i \in V_\b$.
Let $\{h,j\}$ be an edge of $\G$ such that $h \in V_\b$. Put $\a = \b  + \e_h + \e_j$. 
Then $\G_\a$ is a strongly $(t+1)$-saturating weighted graph with $\sum_{i=1}^na_i =2t+1$ and $\nu(\G_\a) = \nu(\G_{\a-\e_i})= t$ for all $i \in V_\a$. \smallskip

For convenience we say that $\G_\a$ is obtained from $\G_\b$ by adding an edge.  

\begin{figure}[ht!]
\begin{tikzpicture}[scale=0.6]
    
\draw [thick] (-8,1) coordinate (a) -- (-6,0) coordinate (b) ;
\draw [thick] (-8,1) coordinate (a) -- (-6,2) coordinate (c) ;
\draw [thick] (-6,0) coordinate (b) -- (-6,2) coordinate (c) ; 
\draw (-5,1) node{+};
\draw [thick] (-4,0) coordinate (d) -- (-4,2) coordinate (e);
\draw (-2.5,1) node{=};
\draw [thick] (-1,1) coordinate (f) -- (1,0) coordinate (g) ;
\draw [thick] (-1,1) coordinate (f) -- (1,2) coordinate (h) ;
\draw [thick] (1,0) coordinate (g) -- (1,2) coordinate (h) ; 
\draw (1,-0.1) node[below,right] {2x};
\draw (1,2.2) node[above, right] {2x};
\draw (2.5,1) node{or};
\draw [thick] (4,1) coordinate (j) -- (6,1) coordinate (i) ;
\draw [thick] (6,1) coordinate (i) -- (8,0) coordinate (k) ;
\draw [thick] (6,1) coordinate (i) -- (8,2) coordinate (l) ;
\draw [thick] (8,0) coordinate (k) -- (8,2) coordinate (l) ;
\draw (5.8,1.4) node{2x};
     
\fill (a) circle (3pt);
\fill (b) circle (3pt);
\fill (c) circle (3pt);
\fill (d) circle (3pt);
\fill (e) circle (3pt);
\fill (f) circle (3pt);
\fill (g) circle (3pt);
\fill (h) circle (3pt);
\fill (i) circle (3pt);
\fill (j) circle (3pt);
\fill (k) circle (3pt);
\fill (l) circle (3pt);

\end{tikzpicture}
\end{figure}

\noindent {\it Proof of the claim}. 
It is clear that $\sum_{i=1}^na_i = \sum_{i=1}^nb_i + 2 = 2t +1$ and $\nu(\G_\a) \ge \nu(\G_\b)+1 = t$. 
Since $2\nu(\G_\a) \le \sum_{i=1}^na_i$, we must have $\nu(\G_\a) = t$.  \par

To show $\nu(\G_{\a-\e_i}) = t$ for all $i \in V_\a$ we note that $\nu(\G_{\a-\e_i})\le \nu(\G_\a) = t$. 
If $i \in V_\b$,  $\G_{\a-\e_i}=\G_{\b-\e_i +\e_h+\e_j}$. Hence $\nu(\G_{\a-\e_i}) \ge \nu(\G_{\b-\e_i}) +1 = t.$
If $i \not\in V_\b$, then $i = j$ because $V_\a = V_\b \cup \{j\}$. Hence $\G_{\a-\e_i}=\G_{\b+\e_h}.$ 
Let $v$ be a vertex of $V_\b$ adjacent to $h$. 
Then $\G_{\b+\e_h} = \G_{\b-\e_v +\e_h+\e_v}$. 
Hence $\nu(\G_{\a-\e_i})  \ge \nu(\G_{\b-\e_v}) +1 = t.$
Thus, $\nu(\G_{\a-\e_i}) = t$ for all $i \in V_\a.$ \par

It remains to show that $\nu(\G_\a  - i) \ge t+1-a_i$ for all $i \in V_\a$.
If $i \in V_\b$,  we have $\nu(\G_\b  - i) \ge t - b_i$. 
For $i = h$ or $i = j$,  we have $a_i = b_i +1$. Hence $\nu(\G_\a  - i) \ge  \nu(\G_\b  - i) \ge t+1-a_i$.
For $i \neq  j, h$, we have $a_i = b_i$. Hence $\nu(\G_\a  - i) \ge  \nu(\G_\b  - i)+1 \ge t+1-a_i$. 
If $i \notin V_\b$, then $a_i = 1$ and $\G_\a  - i = \G_{\a-\e_i}.$ 
As we have seen above, $\nu(\G_{\a-\e_i}) = t$. Hence $\nu(\G_\a  - i) = t+1-a_i.$
The proof of the claim is now complete. \par

Now we continue with the proof of Lemma \ref{existence}. Since $\G_U$ is connected, we can use the claim successively to add $|U|-2s +1$ edges of $\G_U$ to $\G_\c$ to obtain a strongly weighted graph $\G_\a$ with $V_\a = U$ and  $\nu(\G_\a) = t-1$ for $t = s+(|U| - 2s+1) = |U| - s+1$.  For $t > |U| - s +1$ we only need to add more edges of $\G_U$ to get $\G_\a$.
\end{proof}

\noindent {\em Remark}.  Not all strongly $t$-saturating weighted graphs $\G_\a$ with $\sum_{i=1}^na_i = 2t+1$ and $\nu(\G_\a) = \nu(\G_{\a-\e_i})= t$ for all $i \in V_\a$ can be obtained from a strongly $s$-saturating graph on $2s-1$ vertices, $s < t$, by adding edges. The union of a triangle and a rectangle meeting only at a vertex of weight 2 is such a weighted graph. \smallskip

By Theorem \ref{embedded}, if $P_F$ is an embedded associated prime of some power of $I$, then $F$ is minimal among the covers of $\G$ containing $N[U]$ for a subset $U$ of $V$ such that every connected component of $\G_U$ contains at least one odd cycle. 
An odd cycle of length $2s-1$ is obviously a strongly $s$-saturating graph on $2s-1$ vertices. 
Therefore, using Theorem \ref{strong} and Lemma \ref{existence} we can say for which $t$ is $P_F$ an associated primes of $t$.

\begin{Theorem} \label{construction}
Let $F$ be a cover of $\G$ which is minimal among the covers containing $N[U]$ for a set $U \subseteq V$ such that every connected components of $\G_U$ contains at least one odd cycle. Assume that $\G_U$ has $m$ connected components and let $s_i$ be the largest number such that the $i$-th component of $\G_U$ in some order contains a strongly $s_i$-saturating graph on $2s_i -1$ vertices, $i = 1,...,m$. Then $P_F$ is an embedded associated prime of $I^t$ for all $t \ge |U| - \sum_{i=1}^m s_i + 1$.
\end{Theorem}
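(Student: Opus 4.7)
The plan is to invoke Theorem \ref{strong}: since $F$ is already assumed minimal among the covers of $\G$ containing $N[U]$, it suffices to exhibit, for every $t \ge |U| - \sum_{i=1}^m s_i + 1$, a strongly $t$-saturating weighted graph whose underlying vertex set is $U$.

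First, I would decompose the problem along the components. Let $U_1,\ldots,U_m$ be the vertex sets of the connected components of $\G_U$, so that $|U| = \sum_{i=1}^m |U_i|$. By Lemma \ref{strong component}, if $\H_i$ is a strongly $t_i$-saturating weighted graph supported on $U_i$ for each $i$, then the disjoint union $\H = \H_1 \cup \cdots \cup \H_m$ (viewed as a weighted graph on $U$ with the same weights) is strongly $t$-saturating for $t = \nu(\H)+1 = \sum_{i=1}^m \nu(\H_i) + 1 = \sum_{i=1}^m(t_i - 1) + 1$; equivalently, $\sum_{i=1}^m t_i = t + m - 1$. Thus it is enough to choose positive integers $t_1,\ldots,t_m$ with $\sum_{i=1}^m t_i = t + m - 1$ such that each $U_i$ carries a strongly $t_i$-saturating weighted graph.

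Next, for each component, the hypothesis provides a strongly $s_i$-saturating graph on $2s_i - 1$ vertices inside the connected graph $\G_{U_i}$. Applying Lemma \ref{existence} to $\G_{U_i}$ (with parameters $U = U_i$ and $s = s_i$), I obtain a strongly $t_i$-saturating weighted graph on $U_i$ for every $t_i \ge |U_i| - s_i + 1$. It therefore remains to verify feasibility of the splitting: summing the componentwise lower bounds yields $\sum_{i=1}^m(|U_i| - s_i + 1) = |U| - \sum_{i=1}^m s_i + m$, so a valid choice of the $t_i$'s with $\sum_i t_i = t + m - 1$ exists precisely when
\[
t + m - 1 \;\ge\; |U| - \sum_{i=1}^m s_i + m,
\]
i.e., exactly when $t \ge |U| - \sum_{i=1}^m s_i + 1$. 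This is our standing assumption, and one concrete choice is $t_i = |U_i| - s_i + 1$ for $i \ge 2$ and $t_1 = t + m - 1 - \sum_{i\ge 2} t_i$, which then satisfies $t_1 \ge |U_1| - s_1 + 1$.

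Assembling these pieces produces the desired strongly $t$-saturating weighted graph $\H$ on $U$, and Theorem \ref{strong} gives the conclusion that $P_F$ is an embedded associated prime of $I^t$. I do not anticipate a substantive obstacle beyond the arithmetic bookkeeping above: Lemmas \ref{strong component} and \ref{existence} have absorbed the structural work of combining components and of enlarging a strongly saturating subgraph to cover the rest of its component by successive edge additions. The only point requiring mild attention is that the strongly $s_i$-saturating subgraph granted by the hypothesis genuinely sits inside the connected induced graph $\G_{U_i}$, so that Lemma \ref{existence} applies verbatim.
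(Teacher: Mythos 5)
Your proposal is correct and follows essentially the same route as the paper: apply Lemma \ref{existence} componentwise to get strongly $t_i$-saturating weighted graphs with $\nu(\H_i)=t_i-1$, glue them with Lemma \ref{strong component}, and conclude via Theorem \ref{strong}. The only difference is that you spell out the arithmetic of choosing the $t_i$'s explicitly, which the paper leaves implicit.
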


\begin{proof}
Let $\G_i$ be the $i$-th connected component of $\G_U$ and $U_i$ its vertex set, $i = 1,...,m$.  
By Lemma \ref{existence}, there exists a strongly $t_i$-saturating weighted graph $\H_i$ on $U_i$ with $\nu(\H_i) = t_i-1$ for $t_i \ge  |U_i| -s_i +1$. 
Let $\H$ be the weighted graph whose connected components are $\H_1,...,\H_m$. 
Then $\nu(\H) = \sum_{i=1}^m\nu(\H_i) = \sum_{i=1}^m(t_i-1)$. 
By Lemma \ref{strong component}, $\H$ is a strongly $t$-saturating graph for $t = \sum_{i=1}^m(t_i -1) +1$.  Now we can apply Theorem \ref{strong} to deduce that $P_F$ is an associated prime of $I^t$ 
for $t \ge \sum_{i=1}^m(|U_i|-s_i) +1 = |U|-\sum_{i=1}^m s_i + 1$. 
\end{proof}

Let $\Ass(I^t)$ denote the set of the associated primes of $I^t$.
By a result of Brodmann \cite{Br}, there exists a number $t_0$ such that $\Ass(I^t) = \Ass(I^{t+1})$ for $t \ge t_0$.
The stable set $\Ass(I^{t_0})$ is denoted by $\Ass^\infty(I)$.
Note that $\Ass(I^t) \subseteq \Ass(I^{t+1})$ for all $t \ge 1$ by Martinez-Bernal, Morey and Villarreal \cite[Theorem 2.15]{MMV}. 
 In \cite[Theorem 4.1]{CMS} Chen, Morey and Sung gave a combinatorial characterization of $\Ass^\infty(I)$ for the case $\G$ is a connected graph. However, their description of $\Ass^\infty(I)$ is recursive and too complicated to be recalled here. As an immediate consequence of Theorem \ref{embedded} and Theorem \ref{construction} we obtain the following simple characterization of $\Ass^\infty(I)$.

\begin{cor} \label{stable}
Let $F$ be a cover of $\G$. Then $P_F$ belongs to $\Ass^\infty(I)$  if and only if $F$ is a minimal cover or minimal among the covers of $\G$ containing $N[U]$ for a subset $U$ of $V$ such that every connected component of the induced graph $\G_U$ contains at least one odd cycle.  
\end{cor}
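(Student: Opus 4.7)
The plan is to deduce both implications from the two main theorems of the section: Theorem \ref{embedded} supplies the necessity, Theorem \ref{construction} together with the monotonicity result of Martinez-Bernal--Morey--Villarreal supplies the sufficiency.

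For the forward direction, suppose $P_F \in \Ass^\infty(I)$, so that $P_F \in \Ass(I^t)$ for all sufficiently large $t$. If $F$ is a minimal cover of $\G$, there is nothing to prove. Otherwise $P_F$ is an embedded associated prime of $I^t$ for some $t$, so Theorem \ref{embedded} applies: $F$ is minimal among the covers of $\G$ containing $N[U]$ for some $U \subseteq V$ such that every connected component of $\G_U$ contains an odd cycle of length $\le 2t-1$, which in particular means every connected component of $\G_U$ contains at least one odd cycle.

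For the backward direction, there are two cases. If $F$ is a minimal cover of $\G$, then $P_F$ is a minimal associated prime of $I^t$ for every $t \ge 1$, hence lies in $\Ass^\infty(I)$. Otherwise $F$ is minimal among the covers containing $N[U]$ for some $U$ such that every connected component of $\G_U$ contains at least one odd cycle. Since an odd cycle of length $2s-1$ is a strongly $s$-saturating graph on $2s-1$ vertices, every connected component of $\G_U$ does carry such a strongly saturating subgraph for some $s \ge 2$. Theorem \ref{construction} then asserts that $P_F \in \Ass(I^t)$ for all $t \ge t_1$, where $t_1 = |U| - \sum_{i=1}^m s_i + 1$. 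By the monotonicity result of Martinez-Bernal, Morey and Villarreal \cite[Theorem 2.15]{MMV}, $\Ass(I^t) \subseteq \Ass(I^{t+1})$ for every $t$, so once $P_F$ enters the set of associated primes it remains there; hence $P_F \in \Ass^\infty(I)$.

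Both directions are essentially immediate once one assembles the ingredients, so there is no serious obstacle; the only subtlety worth flagging is that Theorem \ref{embedded} gives a bound $\le 2t-1$ on the length of the odd cycle for the particular $t$ witnessing $P_F \in \Ass(I^t)$, whereas the statement of the corollary only requires the existence of some odd cycle in each component of $\G_U$. This is why the backward direction must not try to recover the same $t$: it suffices to use Theorem \ref{construction} to produce some $t_1$ for which $P_F$ is associated, and then invoke monotonicity to pass to the stable set $\Ass^\infty(I)$.
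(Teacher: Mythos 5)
Your proposal is correct and follows exactly the route the paper intends: the corollary is stated there as an immediate consequence of Theorem \ref{embedded} (necessity, dropping the length bound $\le 2t-1$) and Theorem \ref{construction} (sufficiency, using that an odd cycle of length $2s-1$ is strongly $s$-saturating), with minimal covers handled trivially. The only remark is that your appeal to the Martinez-Bernal--Morey--Villarreal monotonicity is redundant, since Theorem \ref{construction} already gives $P_F \in \Ass(I^t)$ for \emph{all} $t \ge t_1$, which combined with Brodmann's stability suffices.
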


We can also give a good\ upper bound for the smallest number $t_0$ 
with the property $\Ass(I^t) = \Ass(I^{t+1})$ for $t \ge t_0$.
This number is called the {\em index of stability} of $\Ass(I^t)$ 
and denoted by $\astab(I)$ \cite{HQ}.
Note that $\astab(I) = 1$ if $\G$ is a bipartite graph 
by a result of Simis, Vasconcelos and Villarreal \cite[Theorem 5.9]{SVV}. 
\smallskip

Let $U$ be a subset of $V$ such that each connected component of $\G_U$ contains at least one odd cycle. Set $s(U) = |U|-\sum_{i=1}^ms_i +1$ if $m$ is the number of connected components of $\G_u$ and $s_i$ is the largest number such that the $i$-component (in some order) contains a strongly $s_i$-saturating graph on $2s_i-1$ vertices. Let $s(\G)$ denote the maximum of all such $s(U)$, where we set $s(\G) = 1$ if $\G$ is a bipartite graph.

\begin{cor} \label{astab}
$\astab(I)  \le s(\G)$.
\end{cor}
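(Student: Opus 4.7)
The plan is to show that $\Ass(I^t) = \Ass^\infty(I)$ for every $t \ge s(\G)$, which by the definition of $\astab(I)$ immediately gives the bound. Since Martinez-Bernal, Morey and Villarreal established $\Ass(I^t) \subseteq \Ass(I^{t+1})$ for all $t \ge 1$, one inclusion $\Ass(I^t) \subseteq \Ass^\infty(I)$ is automatic. So the whole content is the reverse inclusion $\Ass^\infty(I) \subseteq \Ass(I^t)$ for $t \ge s(\G)$.

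First I would dispatch the bipartite case separately: if $\G$ is bipartite then $s(\G) = 1$ by definition and $\astab(I) = 1$ by the theorem of Simis, Vasconcelos and Villarreal, so there is nothing to prove. Assume now that $\G$ is not bipartite and fix $t \ge s(\G)$. Let $P_F \in \Ass^\infty(I)$. By Corollary \ref{stable} there are two cases. If $F$ is a minimal cover of $\G$, then $P_F$ is a minimal prime of $I^t$ and hence belongs to $\Ass(I^t)$ trivially. Otherwise $F$ is minimal among the covers of $\G$ containing $N[U]$ for some $U \subseteq V$ such that every connected component of $\G_U$ contains at least one odd cycle.

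In the second case I would invoke Theorem \ref{construction}. That theorem produces, for each connected component of $\G_U$, the largest integer $s_i$ such that this component contains a strongly $s_i$-saturating graph on $2s_i - 1$ vertices, and it guarantees $P_F \in \Ass(I^t)$ for every $t \ge |U| - \sum_{i=1}^m s_i + 1 = s(U)$. Because $U$ is one of the sets appearing in the definition of $s(\G)$, we have $s(U) \le s(\G) \le t$, so indeed $P_F \in \Ass(I^t)$.

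Putting the two cases together gives $\Ass^\infty(I) \subseteq \Ass(I^t)$ for every $t \ge s(\G)$, which combined with $\Ass(I^t) \subseteq \Ass^\infty(I)$ yields the equality $\Ass(I^t) = \Ass^\infty(I)$ for all such $t$. In particular $\Ass(I^t) = \Ass(I^{t+1})$ for $t \ge s(\G)$, so $\astab(I) \le s(\G)$. The essential technical input is Theorem \ref{construction}, which in turn rests on the edge-addition construction in Lemma \ref{existence}; no new obstacle arises here, since the combinatorial bound $s(U)$ that appears in the definition of $s(\G)$ is precisely tailored to match the bound produced by Theorem \ref{construction}.
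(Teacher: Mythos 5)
Your proposal is correct and follows essentially the same route as the paper: characterize the stable set (the paper quotes Theorem \ref{embedded} directly, you quote Corollary \ref{stable}, which is the same content), then apply Theorem \ref{construction} together with the monotonicity $\Ass(I^t) \subseteq \Ass(I^{t+1})$ of Martinez-Bernal--Morey--Villarreal and the observation $s(U) \le s(\G)$. Your treatment is just slightly more explicit about the bipartite and minimal-cover cases, which the paper disposes of with ``without restriction we may assume $P_F$ is embedded.''
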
 

\begin{proof}
Let $F$ be a cover of $\G$ such that $P_F$ is an associated prime of some of $I$ for $t \gg 0$.
Without restriction we may assume that $P_F$ is an embedded associated prime. 
By Corollary \ref{embedded}, $F$ is minimal among the covers of $\G$ containing $N[U]$ for a set $U \subseteq V$ such that every connected component of the induced graph $\G_U$ contains at least one odd cycle. 
By Theorem \ref{construction}, $P_F$ is an associated prime of $I^t$ for all $t \ge s(\G)$.
\end{proof}

Chen, Morey, and Sung \cite[Proposition 4.2]{CMS} proved that if $\G$ is connected and non-bipartite, 
then $\astab(I) \le n-s$ if $2s-1$ is the minimal length of odd cycles in $\G$ and $n > 2s-1$. 
The following example shows that the bound of Corollary \ref{astab} is much better than this bound. \smallskip

\begin{ex}
{\rm Let $\G$ be the union of $t-1$ triangles meeting each other at a common vertex, $t > 2$.
Since $\G$ has $2t-1$ vertices, $\astab(I) \le 2t-3$ by the bound of Chen, Morey, and Sung. 
On the other hand,  we have $s(\G) = t$, which implies $\astab(I) \le t$ by Corollary \ref{astab}.
To prove $s(\G) = t$ we observe that if $U$ is a subset of $V$ such that $\G_U$ contains at least one triangle,
then $\G_U$ is connected. Suppose that $\G_U$ contains $s-1$ triangles of $\G$.
Let $C$ denote the union of these triangles.
It is easy to see that $C$ is a strongly $s$-saturating graph on $2s-1$ vertices.
The remaining $|U| - (2s-1)$ vertices of $U$ must belong to different triangles of $\G$ outside of $C$. 
Hence $|U| - (2s - 1)  \le t - s$.  
Therefore, $s(U) = |U| - s + 1 \le t.$
In particular, $s(V) = (2t-1) - t + 1 = t$. So we can conclude that $s(\G) =t$.}
\end{ex}

\section{The second and third powers of edge ideals}

In this section we describe the associated primes of $I^t$ for $t = 2,3$, 
where $I$ is the edge ideal of an arbitrary simple graph $\G$.
This will be achieved by classifying the induced subgraphs $\G_U$ such that the associated primes of $I^t$ correspond to the covers of $\G$ which are minimal among the covers containing $N[U]$.
\smallskip

The associated primes of $I^2$ has been described by Herzog and Hibi \cite{HH2} and Terai and Trung \cite{TT2}.
Their results are immediate consequences of our general approach.

\begin{Theorem} \label{2 asso} \cite[Theorem 3.8]{TT2}
Let $F$ be a cover of $\G$. Then $P_F$ is an associated prime of $I^2$ if and only if $F$ is a minimal cover or minimal among the covers containing the closed neighborhood of a triangle. 
\end{Theorem}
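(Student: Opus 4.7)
The plan is to deduce this theorem as an immediate consequence of Theorem \ref{associated} combined with the Remark following Lemma \ref{odd cycle}, which classifies $2$-saturating weighted graphs as triangles with all weights equal to $1$. The two directions of the equivalence will follow by applying the sufficient criterion (Theorem \ref{strong}) in one direction and the necessary criterion (Theorem \ref{associated}) in the other, with essentially no additional combinatorial work required thanks to the extremely restrictive nature of $2$-saturating graphs.

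For the sufficiency direction, if $F$ is a minimal cover of $\G$, then $P_F$ is a minimal (hence associated) prime of $I$, and by the standard fact that $\Ass(I) \subseteq \Ass(I^2)$ for monomial ideals, $P_F \in \Ass(I^2)$. If instead $F$ is minimal among the covers containing $N[U]$ where $U$ is the vertex set of a triangle of $\G$, then I will verify that the triangle with all weights equal to $1$ is a strongly $2$-saturating weighted graph: its matching number equals $1 < 2$, and deleting any one vertex leaves a single edge, so $\nu(\H - j) = 1 \ge 2 - a_j$ for every vertex $j$. Theorem \ref{strong} then produces $P_F$ as an embedded associated prime of $I^2$.

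For the necessity direction, let $P_F$ be an arbitrary associated prime of $I^2$. If $P_F$ is minimal, then $F$ is a minimal cover. If $P_F$ is embedded, Theorem \ref{associated} supplies $\a \in \NN^n$ such that $\G_\a$ is $2$-saturating and $F$ is minimal among the covers containing $N[V_\a]$. Invoking the Remark after Lemma \ref{odd cycle}, $\G_\a$ must be the triangle with all weights equal to $1$, so $V_\a$ is precisely the vertex set of a triangle of $\G$, which is exactly what is required.

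Nothing here should be genuinely hard. The only point that merits a careful sanity check is the classification statement in the Remark: a $2$-saturating weighted graph $\H$ has $\nu(\H) \le 1$, contains a triangle by Lemma \ref{odd cycle}, and cannot have any additional edge, any additional weight ($a_i \ge 2$ on a triangle vertex would allow two distinct edges at that vertex to form a matching of size $2$), or any further vertex beyond the triangle without forcing $\nu(\H) \ge 2$. Once this restriction is in place, the theorem of Herzog--Hibi and Terai--Trung falls out with no further argument.
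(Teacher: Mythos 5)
Your proposal is correct and follows essentially the same route as the paper: reduce to embedded primes, use the Remark after Lemma \ref{odd cycle} to identify $2$-saturating weighted graphs with trivially weighted triangles, and then apply Theorem \ref{associated} for necessity and Theorem \ref{strong} (noting a triangle is strongly $2$-saturating) for sufficiency. The only cosmetic difference is that you invoke $\Ass(I)\subseteq\Ass(I^2)$ for the minimal-cover case, where it suffices to note that minimal covers correspond to the minimal primes of $I^2$ itself.
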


\begin{proof}
We only need to characterize embedded associated primes of $I^2$.
Note that a 2-saturating weighted graph is a triangle. 
Then the assertion follows from Theorem \ref{associated} and Theorem \ref{strong}.
\end{proof}

\begin{cor} \label{depth 2}  \cite[Theorem 2.1]{HH2}, \cite[Theorem 2.8]{TT2}
$\depth R/I^2 > 0$ if and only if $\G$ has no dominating triangle. 
\end{cor}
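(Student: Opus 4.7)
The plan is to specialize Theorem \ref{2 asso} to the cover $F = V$, since $\depth R/I^2 > 0$ is equivalent to $\mm = P_V \notin \Ass(I^2)$. By Theorem \ref{2 asso}, $P_V$ is an associated prime of $I^2$ if and only if $V$ is a minimal cover of $\G$ or $V$ is minimal among the covers of $\G$ containing $N[T]$ for some triangle $T$. So the task reduces to analyzing these two possibilities for the specific cover $F = V$.

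First I would dispose of the minimal-cover case. If $\G$ has at least one edge, then for every $v \in V$ the set $V \setminus \{v\}$ is still a cover (any edge containing $v$ has its other endpoint in $V\setminus\{v\}$, and edges not containing $v$ are certainly covered). Hence $V$ is not a minimal cover whenever $I \neq 0$. (If $I=0$ the corollary is vacuous since $R/I^2 = R$ has positive depth and $\G$ has no triangle.)

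Next I would analyze when $V$ is minimal among the covers containing $N[T]$ for some triangle $T$. Trivially $V \supseteq N[T]$, so the only question is whether removing a vertex $v$ from $V$ while keeping $N[T]$ inside can yield a cover. For any $v \in V \setminus N[T]$, the set $V \setminus \{v\}$ still contains $N[T]$ and, by the observation above, is still a cover. Therefore $V$ is minimal with this property if and only if there is no such $v$, i.e.\ $V \setminus N[T] = \emptyset$, i.e.\ $N[T] = V$. This is exactly the statement that $T$ is a dominating triangle of $\G$.

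Combining the two cases: $\mm \in \Ass(I^2)$ if and only if $\G$ contains a dominating triangle, so $\depth R/I^2 > 0$ if and only if $\G$ has no dominating triangle. There is no real obstacle here; the whole argument is a short specialization of the preceding theorem, with the only minor subtlety being the easy verification that $V\setminus\{v\}$ is always a cover, which forces the minimality condition to collapse to the dominating-triangle condition.
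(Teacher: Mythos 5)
Your proposal is correct and follows essentially the same route as the paper: specialize Theorem \ref{2 asso} to $F = V$ and observe that $V$ can only be minimal among covers containing $N[T]$ when $N[T] = V$, i.e.\ when the triangle is dominating. You merely spell out the easy details (that $V\setminus\{v\}$ is always a cover, hence $V$ is never a minimal cover) that the paper's one-line remark ``$V$ is minimal among the covers containing $N$ iff $V = N$'' leaves implicit.
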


\begin{proof}
Note that $\depth R/I^2  > 0$ if and only if $\mm = P_V$ is not an associated prime of $I^2$.
Since $V$ is minimal among the covers containing a subset $N$ if and only if $V = N$,
we only need to apply Theorem \ref{2 asso} to the case $F = V$ to obtain the assertion.
\end{proof}

For $t=3$ we first describe the monomials of $\widetilde{I^3} \setminus I^3$. To simplify our arguments 
we say that a weighted graph $\H$ is spanned by a weighted subgraph $\H'$ of $\H$ or $\H'$ is a {\em spanning weighted graph} of $\H$ if they share the same vertices with the same weights (their edges may be different). Moreover, we say that $\H$ is a {\em proper extension} of $\H'$ if $\H$ is not spanned by $\H'$.

\begin{Theorem} \label{3-saturating} 
Let $\a \in \NN^n$. Then $x^\a \in \widetilde{I^3} \setminus I^3$ if and only if $V_\a$ is a dominating set of $\G$ and
$\G_\a$ satisfies one of the following conditions:\par
{\rm (i) } $\G_\a$ is a triangle with weight vector $(2,2,1)$,\par
{\rm (ii)} $\G_\a$  is spanned by a union of an edge and a triangle meeting at a vertex of weight 2,\par
{\rm (iii)} $\G_\a$ is a union of two non-adjacent triangles,\par
{\rm (iv)} $\G_\a$ is spanned by the union of two triangles meeting at a vertex,\par
{\rm (v)} $\G_\a$ is spanned by a pentagon, \par
{\rm (vi)} $\G_\a$ is a complete graph $K_4$ and every vertex of $V \setminus V_\a$ is adjacent to at least two vertices of $V_\a$.
\end{Theorem}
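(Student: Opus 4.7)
The plan is to apply Theorem \ref{monomial}, which reformulates the condition $x^\a \in \widetilde{I^3} \setminus I^3$ as: $\G_\a$ is $3$-saturating and $\nu(\G_\a - N_\a(i)) \ge 3 - \deg_\a(i)$ for every $i \in V \setminus V_\a$. Corollary \ref{dominating} then forces $V_\a$ to be a dominating set. The task thus splits into classifying the $3$-saturating weighted graphs $\G_\a$ and, in each case, extracting the precise extra condition on $V \setminus V_\a$. A first reduction via Lemma \ref{component} shows that $\G_\a$ either consists of one $3$-saturating connected component, or of two connected components each of which is $2$-saturating; the remark following Lemma \ref{odd cycle} identifies any $2$-saturating weighted graph as a triangle with all weights $1$, so the second possibility is exactly case (iii).

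For connected $3$-saturating $\G_\a$, I would assemble the bounds $\nu(\G_\a) = 2$, $a_i \le 2$ (Lemma \ref{weight}(i)), $\sum_i a_i \le 6$ (Proposition \ref{bound}, so $n := |V_\a| \le 6$), and the existence of a triangle or $5$-cycle (Lemma \ref{odd cycle}), and carry out a case split on $n$. For $n=3$, the only $3$-saturating weighted triangle is $(2,2,1)$, giving (i). For $n=4$, the base graph contains a triangle, and the weight and matching constraints force either $\G_\a = K_4$ with all weights $1$ (case (vi)) or exactly one vertex of weight $2$ with $\G_\a$ spanned by a triangle plus an edge meeting there (case (ii)). For $n=5$, all weights must be $1$ (any weight $2$ combined with the local constraints forces $\nu \ge 3$), and one distinguishes $\G_\a$ containing a $5$-cycle (case (v), spanned by a pentagon) from $\G_\a$ containing only a triangle (case (iv), where the $3$-saturation inequalities force a spanning pair of triangles meeting at a vertex). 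For $n=6$, all weights are $1$ and $\sum a_i = 6$, and a matching argument rules this case out — for instance, a degree-$2$ vertex forces an edge among its three non-neighbors, and chasing these forced edges together with connectedness produces a matching of size $3$.

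With the classification settled, I would then pin down the condition on $V \setminus V_\a$. For cases (i)--(v), the dominating condition is already sufficient: for $i \in V \setminus V_\a$ adjacent to a single vertex $j \in V_\a$, a direct check shows $\nu(\G_\a - j) \ge 3 - a_j$ by the weight structure (e.g., in (i) removing any vertex leaves a weighted edge of matching $\ge 3 - a_j$; in (iii) the other triangle survives intact). For case (vi), however, a vertex $i \in V \setminus V_\a$ with $\deg_\a(i) = 1$ yields $\G_\a - N_\a(i) = K_3$ with matching $1 < 3 - 1$, so the stronger requirement $\deg_\a(i) \ge 2$ is forced, which is precisely the extra condition. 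The sufficiency direction is then verified for each of (i)--(vi) by explicitly computing $\nu(\G_\a) \le 2$ and the matching bounds on the small weighted graphs involved.

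The main obstacle is the sharp case analysis for $n = 4, 5, 6$. The $n=6$ exclusion is particularly delicate: one must rule out every connected $6$-vertex graph with all weights $1$ satisfying both $\nu \le 2$ globally and the local inequality $\nu(\G_\a - N_\a(i)) \ge 3 - \deg_\a(i)$ at every vertex. The $n=5$ case similarly requires showing that any $3$-saturating $5$-vertex graph without a $5$-cycle must be spanned by two triangles meeting at a vertex, which rests on the observation that the $3$-saturation inequality at any vertex with few neighbors fails unless that vertex lies in a second triangle sharing the common hub.
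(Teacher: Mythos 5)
Your proposal is correct and reaches the same classification, but it organizes the heart of the argument differently from the paper. After the common reductions (Theorem \ref{monomial}, Corollary \ref{dominating}, Lemma \ref{component} plus the remark that a $2$-saturating weighted graph is a triangle, Lemma \ref{odd cycle}, Lemma \ref{weight}), you split the connected case by the number of vertices, using Proposition \ref{bound} to get $\sum a_i\le 6$, and then must do two extra pieces of work: showing that $|V_\a|=5$ forces all weights to be $1$, and excluding $|V_\a|=6$ outright. The paper never invokes Proposition \ref{bound} here; instead it fixes an odd cycle, splits on whether a pentagon occurs and on how many vertices of a chosen triangle have weight $2$, and kills all larger configurations uniformly with the observation that ``any proper extension has matching number $\ge 3$,'' so the vertex count is bounded automatically. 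Your $n=6$ sketch does work: min degree $\ge 3$ on six weight-$1$ vertices gives a $3$-matching (e.g.\ via a Hamiltonian cycle), so some vertex has exactly two neighbours, the saturation inequality forces an edge among its three non-neighbours, any edge between the resulting two clusters yields a $3$-matching, and connectedness supplies such an edge --- but this is a genuine extra obligation your decomposition creates. Two small points to watch when writing it up: in the $n=4$ analysis, $K_4$ with one vertex of weight $2$ is $3$-saturating and must be recognized as an instance of (ii) (it is spanned by a triangle and an edge meeting at the weight-$2$ vertex), and the exclusion of two weight-$2$ vertices on four vertices needs its own short matching check. Your treatment of the condition on $V\setminus V_\a$ --- domination suffices in cases (i)--(v), while $K_4$ forces adjacency to at least two vertices of $V_\a$ --- matches the paper exactly.
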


\begin{proof}
It is easy to check that the weighted graphs $\G_\a$ listed above satisfy the conditions of Theorem \ref{monomial} for $t = 3$, which implies $x^\a \in  \widetilde{I^3}\setminus I^3$. \par

To prove the converse let $x^\a \in \widetilde{I^3}\setminus I^3$. 
Then $\G_\a$ is a 3-saturating weighted graph by Theorem \ref{monomial}, 
and $V_\a$ is a dominating set of $\G$ by Lemma \ref{dominating}. 
By the definition of 3-saturating weighted graph, $\nu(\G_\a) \le 2$.
If $\nu(\G_\a) = 1$, then $\G_\a$ is 2-saturating.
A 2-saturating weighted graph must be a triangle.
Since a triangle is not 3-saturating, we get a contradiction. 
Thus, $\nu(\G_\a) = 2$. \par

If $\G_\a$ has more than one connected components, 
then there are only two components and each component must be 2-saturating  by Lemma \ref{component}.
Hence $\G_\a$ must be a union of two disjoint triangles. 
That is Case (iii). So we may assume that $\G_\a$ is connected. \par

By Lemma \ref{odd cycle}, $\G_\a$ contains at least a cycle or a pentagon.
If $\G_\a$ contains a pentagon, then $\G_\a$ is spanned by this pentagon 
because any proper extension of a pentagon has matching number $\ge 3$. 
That is Case (v). So we may further assume that $\G_\a$ contains a triangle, say $C = \{i,j,h\}$, and no pentagon. Note that the weight of every vertex of $C$ is at most 2 by Lemma \ref{weight}(i). \par

If two vertices of $C$ has weight 2, say $a_i = a_j = 2$,
then $\G_\a$ must be the weighted graph on $C$ with weight vector $(2,2,1)$ 
because any proper extension of this weighted graph has matching number $\ge 3$. 
That is Case (i). \par

If only a vertex of $C$ has weight 2, say $a_i = 2$, 
then $\deg_\a(i) > 2$ by Lemma \ref{weight}(i). 
Since $\deg_\a(i) = \sum_{v \in N(i)}a_v$ and  $a_j = a_h = 1$, 
$N(i)$ must contain at least a vertex $v \neq j, h$. 
So $\G_\a$ contains the union of the edge $\{i,v\}$ and the triangle $C$ with only $i$ having weight 2.
Since any proper extension of this weighted graph has matching number $\ge 3$, $\G_\a$ is spanned by this weighted graph. That is Case (ii). \par

If all vertices of $C$ have weight one, then $\G_\a$ has at least a vertex $v \notin C$ 
 because $\nu(C) = 1$. Since $\G_\a$ is connected,
we may assume that $v$ is adjacent to a vertex of $C$, say $i$.
Since $a_i = 1$, $v$ is not a leaf vertex of $\G_\a$ by Lemma \ref{weight}(ii). 
So $v$ is adjacent to another vertex of $\G_\a$.
Now we distinguish two cases. \par

\noindent{\em Case 1}: $v$ is adjacent to a vertex of $\G_a$ outside of $C$, say $w$ (see Figure I).
Then $a_v = 1$ because otherwise the edges $\{w,v\}, \{v,i\}, \{j,h\}$ would form a matching of $\G_\a$, 
a contradiction to $\nu(\G_\a) = 2$. 
Thus, $w$ is not a leaf vertex of $\G_\a$ by Lemma \ref{weight}(ii). 
On the other hand, $w$ can not be adjacent to any other vertex $u \neq w$ of $\G_\a$ outside of $C$ because otherwise 
the edges $\{u,w\}, \{v,i\}, \{j,h\}$ would form a matching of $\G_\a$,  a contradiction to $\nu(\G_\a) = 2$. 
So $w$ is adjacent to at least a vertex of $C$. 
If $w$ is adjacent to $j$ or $h$, $\G_\a$ would contain a pentagon on the vertices $i,j,h,v,w$, 
a contradiction to our assumption. 
So $w$ is adjacent to $i$, and $\G_\a$ contains a union of two triangles meeting at a vertex. 
Since any proper extension of this union has matching number $\ge 3$, $\G_\a$ is spanned by this union. 
That is Case (iv).
\par

\begin{figure}[ht!]
\begin{tikzpicture}[scale=0.6]
    
\draw [thick] (-4,1) coordinate (a) -- (-2,0) coordinate (b) ;
\draw [thick] (-4,1) coordinate (a) -- (-2,2) coordinate (c) ;
\draw [thick] (-2,0) coordinate (b) -- (-2,2) coordinate (c) ; 
\draw (-2,-0.1) node[below,right] {h};
\draw (-2,2.2) node[above, right] {j};
\draw (-4.3,1) node[below, left] {i};
\draw [dashed] (-4,1) coordinate (b) -- (-6,0) coordinate (d) ;
\draw [dashed] (-2,0) coordinate (b) -- (-6,0) coordinate (d) ;
\draw [thick] (-6,0) coordinate (d) -- (-6,2) coordinate (e) ;
\draw (-6,0) node[below, left] {w};
\draw [thick] (-6,2) coordinate (e) -- (-4,1) coordinate (a) ;
\draw (-6,2) node[above,left]{v};
\draw (-4,-1) node{Figure I};
     
\fill (a) circle (3pt);
\fill (b) circle (3pt);
\fill (c) circle (3pt);
\fill (d) circle (3pt);
\fill (e) circle (3pt);

\draw [thick] (4,1) coordinate (f) -- (6,0) coordinate (g) ;
\draw [thick] (4,1) coordinate (f) -- (6,2) coordinate (h) ;
\draw [thick] (6,0) coordinate (g) -- (6,2) coordinate (h) ; 
\draw (6,-0.1) node[below,right] {h};
\draw (6,2.2) node[above, right] {j};
\draw (3.7,1) node[left] {i};
\draw [dashed] (6,0) coordinate (g) -- (2,0) coordinate (i) ;
\draw [thick] (6,2) coordinate (h) -- (2,2) coordinate (j) ;
\draw (2,0) node[below, left] {w};
\draw [thick] (2,2) coordinate (j) -- (4,1) coordinate (f) ;
\draw [dashed] (2,0) coordinate (i) -- (4,1) coordinate (f) ;
\draw (2,2) node[above,left]{v};
\draw (4,-1) node{Figure II};
     
\fill (f) circle (3pt);
\fill (g) circle (3pt);
\fill (h) circle (3pt);
\fill (i) circle (3pt);
\fill (j) circle (3pt);

\end{tikzpicture}
\end{figure}

\noindent{\em Case 2}:  $v$ is not adjacent to any vertex of $\G_\a$ outside of $C$.
Then $v$ is adjacent to another vertex of $C$, say $j$ (see Figure II).
Let $w$ be an arbitrary vertex of $\G_\a$ outside of $C$ which is adjacent to a vertex of $C$.
By Case 1, we may assume that $w$ is not adjacent to any vertex of $\G_\a$ outside of $C$. 
Since $\G_\a$ is connected, this implies that $\G_\a - C$ consists of only isolated vertices.
As a consequence, $\nu(\G_\a-N_\a(h)) = 0$ because $C \subseteq N_\a(h)$.
By Theorem \ref{monomial}(ii), this implies $\deg_\a(h) \ge 3$.
Hence $N(h)$ contains at least a vertex of $\G_\a$ outside of $C$. Let $w$ be such a vertex.
Like $v$, the vertex $w$ is adjacent to at least a vertex of $C$ other than $h$, say $i$.
If $w \neq v$, we would get a pentagon on the vertices $i,v,j,h,w$, a contradiction to our assumption. 
So we must have $w = v$. Since $v, w$ are arbitrarily chosen, $v$ must be the unique vertex of $\G_\a$ outside of $C$. Therefore, $\G_\a$ is a complete graph $K_4$.
For every vertex $u \in V \setminus V_\a$ we have
$\deg_\a(u) + \nu(\G_\a - N_\a(u)) \ge 3$ 
by Theorem \ref{monomial}(ii).
Since $V_\a$ is a dominating set of $\G$, $N_\a(u) \neq \emptyset$. 
Consequently, $\G_\a - N_\a(u)$ is contained in a triangle. Hence
 $\nu(\G_\a - N_\a(u)) \le 1$. So we get $\deg_\a(u) \ge 2$. 
This means $u$ is adjacent to at least two vertices of $\G_\a$. 
That is Case (vi). \par

The proof of Theorem \ref{3-saturating} is now complete.
\end{proof}

Using Theorem \ref{3-saturating} we can characterize the associated primes of $I^3$ as follows.

\begin{Theorem} \label{asso 3} 
Let $F$ be a cover of $\G$. Then $P_F$ is an associated prime of $I^3$ if and only if $F$ is a minimal cover or $F$ is minimal among the covers containing the closed neighborhood of the vertex set of a subgraph of the following forms:
a triangle, a union of an edge and a triangle meeting at a vertex, a union of two non-adjacent triangles,  a union of two triangles meeting at a vertex, a pentagon. 
\end{Theorem}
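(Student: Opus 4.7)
The plan is to combine the classification of monomials in $\widetilde{I^3} \setminus I^3$ from Theorem \ref{3-saturating} with the embedded-associated-prime characterization of Theorem \ref{associated} (for necessity) and with Theorem \ref{strong} (for sufficiency). The minimal associated primes of $I^3$ correspond exactly to the minimal covers of $\G$ by the standard fact already recalled, so the entire burden is on embedded primes.

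For the sufficiency direction, suppose $F$ is minimal among the covers of $\G$ containing $N[U]$, where $U$ is the vertex set of a subgraph of $\G$ of one of the five listed forms. I choose $\a \in \NN^n$ with $V_\a = U$ whose weights mirror cases (i)--(v) of Theorem \ref{3-saturating}: weights $(2,2,1)$ on a triangle, weights $(2,1,1,1)$ with the $2$ at the shared vertex of the edge-and-triangle configuration, and all weights $1$ in the remaining three cases. A short case-by-case check confirms $\nu(\G_\a) = 2$ and $\nu(\G_\a - j) \ge 3 - a_j$ for every vertex $j$ of $U$, so $\G_\a$ is strongly $3$-saturating, and Theorem \ref{strong} yields $P_F \in \Ass(I^3)$.

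For the necessity direction, suppose $P_F$ is an embedded associated prime of $I^3$. By Proposition \ref{core} applied to $J = I(\G_{\core(F)})$, there exists $\a \in \NN^n$ with $V_\a \subseteq \core(F)$ such that $x^\a \in \widetilde{J^3} \setminus J^3$. Applying Theorem \ref{3-saturating} to $\G_{\core(F)}$ in place of $\G$ places $\G_\a$ into one of six cases. Cases (i)--(v) correspond immediately to the five listed subgraph types, and the proof of Theorem \ref{associated} shows $F$ is minimal among covers of $\G$ containing $N[V_\a]$. Case (vi), where $\G_\a = K_4$ with every vertex of $\core(F) \setminus V_\a$ adjacent to at least two of its vertices, has to be absorbed into case (i): for any triangle $T \subset V_\a$, pigeonhole (since $|V_\a \setminus T| = 1$) forces every vertex of $\core(F) \setminus V_\a$ to have a neighbor in $T$, while the fourth vertex of $V_\a$ is adjacent to all of $T$ inside the $K_4$; hence $\core(F) \subseteq N[T]$. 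A short minimality argument --- distinguishing whether a vertex of $F \setminus N[T]$ lies in $\core(F)$ (where the extra adjacency condition applies) or in $F \setminus \core(F)$ (where the defining property of $\core(F)$ applies) --- then upgrades the minimality of $F$ for $N[V_\a]$ to minimality for $N[T]$.

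The main obstacle is the reduction of the $K_4$ case (vi) to the triangle case, which needs both the neighborhood condition from Theorem \ref{3-saturating}(vi) and the definition of $\core(F)$ working in concert. A smaller subtlety is that the induced subgraph $\G_U$ may have more edges than the nominal configuration $H$ on $U$: for cases (ii), (iv), (v) with $|U| \le 5$, such enlargements still keep $\nu(\G_\a) \le 2$ and do not affect the strongly $3$-saturating check; for case (iii), the term \emph{non-adjacent} rules out any edge between the two triangles, so $\G_U$ coincides with $H$ and the check goes through unchanged.
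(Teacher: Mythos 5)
Your proposal is correct and runs on the same machinery as the paper's own proof, with one mild divergence in how sufficiency is obtained. Your necessity direction is structurally identical to the paper's: Proposition \ref{core} plus Theorem \ref{3-saturating} applied to $\G_{\core(F)}$, with cases (i)--(v) yielding the five listed configurations (via the minimality statement extracted from the proof of Theorem \ref{associated}) and the $K_4$ case (vi) absorbed into the triangle case exactly as in the paper's closing remark; your pigeonhole argument and the $\core(F)$-versus-$F\setminus\core(F)$ dichotomy for vertices of $F\setminus N[T]$ simply make explicit what the paper compresses into one sentence. The only real difference is sufficiency: the paper gets it from the ``if and only if'' of Theorem \ref{associated} combined with Theorem \ref{3-saturating} (so the domination of $\G_{\core(F)}$ by $V_\a$ has to come out of the minimality of $F$), whereas you assign the weights of cases (i)--(v) to the five configurations, check they are strongly $3$-saturating, and invoke Theorem \ref{strong} --- the same route the paper itself takes for $I^2$ in Theorem \ref{2 asso}. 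What your route buys is a self-contained verification that sidesteps the domination bookkeeping, at the cost of the explicit matching computations; and your final paragraph correctly handles the one subtlety this creates, namely that the induced graph $\G_U$ may have extra edges: total weight $5$ forces $\nu(\G_\a)\le 2$ in cases (i), (ii), (iv), (v), while the non-adjacency hypothesis is exactly what keeps $\nu\le 2$ in case (iii).
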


\begin{proof}
We may assume that $F$ is not a minimal cover of $\G$. Then $\core(F) \neq \emptyset$. 
Let $S := k[x_i|\ i \in \core(F)]$ and $J := I(\G_{\core(F)})$.  
By Theorem \ref{monomial} and Theorem \ref{associated},
$P_F$ is an associated prime of $I^3$ if and only if $F$ is a minimal  among the covers containing $N[V_\a]$ for some $\a \in \NN^n$ such that $x^\a \in \widetilde{J^3} \setminus J^3$. 
By Theorem \ref{3-saturating}, this is the case if and only if the base graph 
of $\G_\a$ is of the above forms. Note that if $\G_\a$ is a complete graph $K_4$ and every vertex of $\core(F) \setminus V_\a$ is adjacent to two vertices of $V_\a$, then every vertex of $\core(F) \setminus V_\a$ is adjacent to every triangle of this $K_4$. Hence this case can be passed to the case of a triangle.
\end{proof}

As in the proof of Corollary \ref{depth 2} we obtain from Theorem \ref{asso 3} the following criterion.

\begin{cor} \label{depth 3} 
$\depth R/I^3 > 0$ if and only if $\G$ has no dominating subgraph of the following forms: 
a triangle, a union of an edge and a triangle meeting at a vertex, a union of two non-adjacent triangles, a union of two triangles meeting at a vertex, a pentagon. 
\end{cor}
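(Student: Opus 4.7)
The plan is to mimic the proof of Corollary \ref{depth 2} word-for-word, with Theorem \ref{asso 3} replacing Theorem \ref{2 asso}. The key translation is that $\depth R/I^3 > 0$ is equivalent to saying $\mm = P_V$ is not an associated prime of $I^3$, so we only need to apply Theorem \ref{asso 3} in the single case $F = V$ and then interpret what it says.

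First I would observe the trivial combinatorial fact that $V$ is minimal among the covers of $\G$ containing a subset $N \subseteq V$ if and only if $V = N$. Applied to Theorem \ref{asso 3}, this means $P_V \in \Ass(I^3)$ if and only if either $V$ is itself a minimal cover of $\G$ (which, assuming $\G$ has at least one edge, cannot happen because removing any vertex from $V$ still leaves a cover), or there exists a subgraph $H$ of $\G$ of one of the five listed forms (a triangle, a union of an edge and a triangle meeting at a vertex, a union of two non-adjacent triangles, a union of two triangles meeting at a vertex, or a pentagon) such that $V = N[V(H)]$.

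The equality $V = N[V(H)]$ is exactly the statement that $V(H)$, and hence $H$, is a dominating subgraph of $\G$. Combining the two directions gives the claim. I do not anticipate any serious obstacle: the only subtlety is disposing of the degenerate possibility that $V$ is a minimal cover, which does not arise as soon as $\G$ has an edge (and if $\G$ has no edge, then $I=0$ and both sides of the stated equivalence are trivially true), exactly as was tacitly handled in Corollary \ref{depth 2}.
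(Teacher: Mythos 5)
Your proposal is correct and is essentially the paper's own argument: the paper proves this corollary exactly "as in the proof of Corollary \ref{depth 2}", i.e., by noting that $\depth R/I^3>0$ iff $\mm=P_V\notin\Ass(I^3)$, that $V$ is minimal among covers containing $N$ iff $V=N$, and then applying Theorem \ref{asso 3} with $F=V$, which translates $V=N[V(H)]$ into $H$ being dominating. Your explicit dismissal of the case where $V$ is a minimal cover is a harmless elaboration of what the paper leaves tacit.
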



\begin{thebibliography}{1}

\bibitem{Br} M. Brodmann, {\it Asymptotic stability of $\Ass(M/I^nM)$}, Proc. Amer. Math. Soc. 74 (1979), 16--18.
\bibitem{CMS} J. Chen, S. Morey and A. Sung, {\it The stable set of associated primes of the ideal of a graph}, Rocky Mountain J. Math. {\bf 32} (2002), 71--89.
\bibitem{FHV} C.A. Francisco, H.T. Ha and A. Van Tuyl, {\it Colorings of hypergraphs, perfect graphs, and associated primes of powers of monomial ideals}, J. Algebra {\bf 331} (2011), 224--242.
\bibitem{HM} H.T. Ha and S. Morey, {\it Embedded associated primes of powers of squarefree monomial ideals}, J. Pure Appl. Algebra {\bf 214} (2010), 301--308.
\bibitem{HH1}  J. Herzog and T. Hibi, {\it The depth of powers of an ideal}, J. Algebra {\bf 291} (2005), 534--550.
\bibitem{HH2} J. Herzog and T. Hibi, {\it Bounding the socles of powers of squarefree monomial ideals}, Preprint 2013, arXiv:1308.5400.
\bibitem{HQ} J. Herzog and A. Qureshi, {\it Persistence and stability properties of powers of ideals}, Preprint 2013,  arXiv:1208.4684.
\bibitem{LM} L. Lovasz and M.D. Plummer, {\it Matching Theory},  AMS Chelsea Publishing, 2009.
\bibitem{MMV} J. Martinez-Bernal, S. Morey and R. Villarreal, {\it Associated primes of powers of edge ideals}, Collect. Math. {\bf 63} (2012), 361--374.
\bibitem{RTY1} G. Rinaldo, N. Terai, and K. Yoshida, {\it Cohen--Macaulayness for symbolic power ideals of edge ideals}, J. Algebra {\bf 347} (2011), 405--430.
\bibitem{RTY2} G. Rinaldo, N. Terai, and K. Yoshida,  {\it On the second powers of Stanley-Reisner ideals}, J. Commut. Algebra {\bf 3} (2011),  405--430. 
\bibitem{SVV} A. Simis, W. Vasconcelos and R. Villarreal, {\em On the ideal theory of graphs},  J. Algebra  {\bf 167}  (1994),  no. 2, 389--416.
\bibitem{Ta} Y. Takayama, {\it Combinatorial characterizations of generalized Cohen-Macaulay monomial ideals}, Bull. Math. Soc. Sci. Math. Roumanie (N.S.) {\bf 48} (2005), 327--344.
\bibitem{TT1} N. Terai and N.V. Trung, {\it Cohen-Macaulayness of large powers of Stanley-Reisner ideals}, Adv. in Math. {\bf 229} (2012),  711--730.
\bibitem{TT2} N. Terai and N.V. Trung, {\it On the associated primes and the depth of the second power of squarefree monomial ideals},  J. Pure Appl. Algebra {\bf 218} (2014), 1117--1129.
\bibitem{TNT} T.N. Trung, {\it Stability of depth of power of edge ideals}, Preprint 2013.
\end{thebibliography}
\end{document}